\makeatletter \@addtoreset{equation}{section} \makeatother
\renewcommand\thetable{\thesection.\@arabic\c@table}
\theoremstyle{plain}
\newtheorem{maintheorem}{Theorem}
\newtheorem{theorem}{Theorem }[section]
\newtheorem{mainproposition}{Proposition}
\newtheorem{lemma}{Lemma}[section]
\newtheorem{corollary}{Corollary}[section]
\newtheorem{remark}[theorem]{Remark}
\newtheorem{defi}{Definition}[section]
\newcommand{\al} {\alpha}
\newcommand{\la} {\lambda}
\def \co {\mathcal{O}}
\def \NN {{\mathbb N}}
\def \tz {{\mathrm{Z}}}
\newcommand{\supp}{\operatorname{supp}}
\newcommand{\dist}{\operatorname{dist}}
\newcommand{\cP}{\mathcal{P}}
\newcommand{\cO}{\mathcal{O}}
\begin{document}

\title{Contribution to the ergodic theory of robustly transitive maps}

\author{C. Lizana}
\address{Departamento de Matem\'atica\\
Facultad de Ciencias\\
Universidad de Los Andes\\
La Hechicera-M\'{e}rida, 5101\\
Venezuela}
 \email{clizana@ula.ve}

\author{V. Pinheiro}
\address{Departamento de Matem\'atica, Universidade Federal da Bahia\\
  Av. Ademar de Barros s/n, 40170-110 Salvador, Brazil.}
\email{vilton@ufba.br}

\author{P. Varandas}
\address{Departamento de Matem\'atica, Universidade Federal da Bahia\\
  Av. Ademar de Barros s/n, 40170-110 Salvador, Brazil.}
\email{paulo.varandas@ufba.br}

\date{\today}

\maketitle

\begin{abstract}
In this article we intend to contribute in the understanding of
the ergodic properties of the set $\mathcal{RT}$ of robustly
transitive local diffeomorphisms on a compact manifold $M$ without
boundary. We prove that there exists a $C^1$
residual subset $\mathcal R_0\subset \mathcal{RT}$ such that any
$f\in \mathcal R_0$ has a residual subset of $M$ with dense
pre-orbits. Moreover, $C^1$ generically in the space of 
local diffeomorphisms with no splitting and all points with dense pre-orbits, there are
uncountably many ergodic expanding invariant measures with full support and
exhibiting exponential decay of correlations.
In particular, these results hold for an important class of robustly
transitive maps considered in~\cite{LP}.
\end{abstract}

\section{Introduction and Statement of the Main Results} \label{sec2}

In the last two decades many advances have been made
to the study of robustly transitive diffeomorphisms, whose geometric
properties are by now very well understood.  In fact,
it follows from Bonatti, D\'iaz, Pujals~\cite{BDP} that robustly
transitive diffeomorphisms exhibit a weak form of hyperbolicity, namely,
 dominated splitting. The ergodic aspects of robustly transitive diffeomorphisms
 have called the attention of many authors recently. For instance,  let us refer to the
 construction of SRB measures (see~\cite{Ca}) and maximal entropy measures
 (see~\cite{BFSV}) for the class of DA-maps introduced by Ma\~n\'e,
 and more recently, \cite{Ures} proved intrinsic
 ergodicity(unique entropy maximizing measure) for partially hyperbolic diffeomorphisms
 homotopic to a hyperbolic one on the 3-torus.

 The theory is much more incomplete in the
 non-invertible setting, in which case the study of robust transitivity has
 received far less attention. Since the negative iterates of an endomorphism
 are not easy to describe, the dynamics can be very hard to explain.
Nevertheless, the first important contributions in this respect were given recently
in~\cite{LC,LP}, where it is shown that there are robustly transitive local diffeomorphisms
without any dominated splitting, and some necessary and sufficient conditions
for robust transitivity of local diffeomorphisms are given.
Our purpose here is to give a contribution to the better understanding of robustly
transitive local diffeomorphisms and to give first results on
their ergodic theory.


Let $M$ be a compact Riemannian manifold. We say that an
endomorphism $f: M\rightarrow M$ is \emph{transitive} if there
exists $x\in M$ such that its forward orbit by $f$,
$\mathcal{O}_f^+(x)=\{f^n(x)\}_{n\geq 0},$ is dense in $M.$
In the theory of differentiable dynamical systems, it is an
important issue to know when a special feature is exhibited in all
nearby systems (with respect to some topology), that is, a
dynamical property is \emph{robust} under perturbation. In
particular, a map $f$ is $C^r$ \emph{robustly transitive} ($r\ge
1$) if there exists  a $C^r$ open neighborhood $\mathcal{U}(f)$ of
$f$ such that every $g\in \mathcal{U}(f)$ is transitive.

We focus our attention to local diffeomorphisms, that is
endomorphisms without critical points. Let us denote by $\mathcal{RT}$ the set of $C^1$
local diffeomorphisms on $M$ that are robustly transitive.
It is not hard to check that any endomorphism that admits a dense subset
of points with dense pre-orbits is transitive. In our first result we address
a sort of converse to the previous assertion for generic maps. More precisely,

\begin{maintheorem}\label{thm:preorbits}
There exists a $C^1$ residual subset $\mathcal R_0\subset
\mathcal{RT}$ such that for any $f\in \mathcal R_0$ the following
conditions hold:
\begin{enumerate}
\item Periodic points are dense in $M$.
\item All periodic orbits are hyperbolic.
\item There exists a residual subset $\mathcal{D} \subset M$ of points such that for any $x \in \mathcal{D}$
the pre-orbit $\co^{-}_{f}(x)=\{w\in f^{-n}(x): n\in \mathbb{N}\}$ is dense in $M$.
\end{enumerate}
\end{maintheorem}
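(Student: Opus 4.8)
The plan is to obtain items (1)--(2) from classical $C^1$ genericity results, transported to the category of local diffeomorphisms, and to derive item (3) as a soft Baire-category consequence of the fact that a local diffeomorphism is an open map, valid in fact for \emph{every} $f\in\mathcal{RT}$.

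First I would observe that $\mathcal{RT}$ is open in the space of $C^1$ local diffeomorphisms of $M$ (by the very definition of robust transitivity), and the latter is an open subset of the Baire space $C^1(M,M)$; hence $\mathcal{RT}$ is itself a Baire space, and it suffices to intersect $\mathcal{RT}$ with residual subsets of $C^1(M,M)$. Item (2) is then the Kupka--Smale theorem in the non-invertible setting: the maps all of whose periodic orbits are hyperbolic form a $C^1$ residual set. Item (1) follows from the $C^1$ closing lemma for local diffeomorphisms: on a $C^1$ residual set one has $\overline{\operatorname{Per}(f)}=\Omega(f)$, and since every $f\in\mathcal{RT}$ is transitive one has $\Omega(f)=M$ (topological transitivity forces every point to be nonwandering). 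Taking $\mathcal{R}_0$ to be $\mathcal{RT}$ intersected with these two residual sets yields a residual subset of $\mathcal{RT}$ satisfying (1) and (2).

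For item (3), fix a countable basis $\{V_j\}_{j\in\mathbb{N}}$ of nonempty open subsets of $M$. A point $x$ has dense pre-orbit exactly when for each $j$ there is $n\ge1$ with $f^{-n}(x)\cap V_j\neq\emptyset$, i.e. $x\in f^n(V_j)$; hence
\[
\mathcal{D}_f:=\{x\in M:\ \mathcal{O}^-_f(x)\text{ is dense in }M\}=\bigcap_{j\in\mathbb{N}}\ \bigcup_{n\ge1}f^n(V_j).
\]
Since $f$, and therefore each iterate $f^n$, is a local diffeomorphism, it is an open map; thus $f^n(V_j)$ is open and $G_j:=\bigcup_{n\ge1}f^n(V_j)$ is open. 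Moreover $G_j$ is dense: a compact manifold has no isolated points, so an elementary argument shows that the transitivity hypothesis (existence of a dense forward orbit) upgrades to topological transitivity, whence for every nonempty open $W$ there is $n\ge1$ with $f^n(V_j)\cap W\neq\emptyset$, i.e. $W\cap G_j\neq\emptyset$. By the Baire category theorem $\mathcal{D}_f=\bigcap_j G_j$ is residual in $M$, and taking $\mathcal{D}=\mathcal{D}_f$ establishes (3) for every $f\in\mathcal{RT}$, in particular on $\mathcal{R}_0$.

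The genuine work is concentrated in items (1) and (2): one must ensure that the $C^1$ closing lemma and the Kupka--Smale theorem, with the precise statements used above, are indeed available for local diffeomorphisms rather than diffeomorphisms, the non-invertibility being the delicate point. Item (3) uses nothing about the $C^1$ structure beyond openness of local diffeomorphisms and is a purely topological argument.
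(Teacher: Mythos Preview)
Your proposal is correct and, for items (1) and (2), follows exactly the paper's approach: invoke the $C^1$ closing lemma and the Kupka--Smale theorem in the local-diffeomorphism category, then intersect with $\mathcal{RT}$.

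For item (3) you take a different route. The paper proves a more general auxiliary lemma (its Lemma~\ref{le:alpha}): for any continuous endomorphism whose critical set has empty interior, transitivity already forces $\{x:\alpha_f(x)=\omega_f(x)=M\}$ to be residual. The argument there fixes a point $p$ with dense forward orbit avoiding the critical set, and for each $n$ builds an open dense set of points whose pre-orbit is $1/n$-dense by pushing forward small balls around $p$ on which $f^j$ is a homeomorphism. Your argument is more direct: write $\mathcal{D}_f=\bigcap_j\bigcup_{n\ge1}f^n(V_j)$ for a countable basis $\{V_j\}$, use that a local diffeomorphism is open to get each $G_j$ open, and use topological transitivity to get each $G_j$ dense. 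Both are valid; yours is shorter and exactly tailored to local diffeomorphisms, while the paper's lemma is stated in greater generality (it allows a nontrivial critical set, provided it has empty interior). In either approach the conclusion is that (3) holds for \emph{every} $f\in\mathcal{RT}$, not merely on the residual set, which you correctly point out.
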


In the remaining,  our goal is to show that robustly transitive local diffeomorphisms are interesting from the
ergodic theory point of view. For that discussion let us recall some necessary definitions.
Given a compact forward invariant set $\Lambda$,
we say that $f\mid_\Lambda$ has \emph{no splitting in a $C^1$ robust way}
if there exists a  $C^1$ open neighborhood $\mathcal{U}(f)$ of $f$ so that for all
$g\in\mathcal{U}(f)$ the tangent space $T_\Lambda M$ does not admit non-trivial
invariant subbundles.
We denote by $\mathcal{RT}^*\subset \mathcal{RT}$ the open subset of $C^1$ robustly transitive
local diffeomorphisms that have no splitting in a $C^1$ robust way.
Let $\mathcal{SRT}^*\subset \mathcal{RT}^*$ denote the set of local diffeomorphisms
so that all points have dense pre-orbits. It is easy to see that $\mathcal{SRT}^*$ has non-empty interior
and we consider in $\mathcal{SRT}^*$ the induced topology from $\mathcal{RT^*}$.
Moreover, an ergodic $f$-invariant probability measure $\mu$ is \emph{expanding}
if all the Lyapunov exponents are positive.
Finally, we say that $(f,\mu)$ has \emph{exponential decay of correlations} if there are constants
$K,\al>0$ and $\lambda \in (0,1)$ such that for all $\psi\in C^\alpha(M,\mathbb R)$, $\varphi\in L^1(\mu)$
and $n\in \mathbb N$:
$$
\Big| \int \psi \, (\varphi\circ f^n) \, d\mu - \int \psi \, d\mu  \; \int \varphi \, d\mu \Big|
    \leq K \lambda^n \|\psi\|_{\alpha} \, \|\varphi\|_{L^1(\mu)}.
$$
Our next result illustrates that generically robustly transitive maps
exhibit many ergodic measures with interesting dynamical meaning.

\begin{maintheorem}\label{thm:measures}
There exists a $C^1$ residual subset $\mathcal R_1\subset
\mathcal{SRT}^*$ such that for any $f\in \mathcal R_1$
there are uncountable many $f$-invariant, ergodic and expanding measures
with full support
and exponential decay of correlations.
\end{maintheorem}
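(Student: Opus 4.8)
The plan is to build the desired ergodic measures by exploiting the robust absence of any dominated splitting, which by the dichotomy of Bonatti--D\'iaz--Pujals (adapted to the endomorphism setting in~\cite{LP}) forces the existence of periodic orbits whose derivatives, after a $C^1$ perturbation, can be turned into conformal expansions; combined with Theorem~\ref{thm:preorbits} one then propagates this expansion to an open-and-dense set of dynamical cylinders and applies a Ruelle--Perron--Frobenius type argument. More concretely, I would first fix $f$ in a well-chosen residual subset of $\mathcal{SRT}^*$: starting from $\mathcal R_0$ of Theorem~\ref{thm:preorbits} we get density of hyperbolic periodic points and density of pre-orbits. The key first step is to show that for a $C^1$-generic $f\in\mathcal{SRT}^*$ there is a periodic point $p$ of period $\pi$ whose derivative $Df^\pi(p)$ is a homothety (all eigenvalues of equal modulus $>1$): indeed, no robust splitting means that one cannot separate the spectrum of periodic data uniformly, so a Franks-type perturbation lemma lets us make the periodic data of some orbit as close to conformal as we wish, and a Baire/residuality argument upgrades "arbitrarily close to conformal" to "exactly conformal" along a residual set. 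This is the step I expect to be the main obstacle, since Franks' lemma in the volume-preserving-free, non-invertible setting must be used carefully to preserve the local-diffeomorphism property and the no-splitting open condition.

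Next, using that $p$ has dense pre-orbits (item (3) of Theorem~\ref{thm:preorbits}, valid on the residual set), I would construct a \emph{Markov-like induced scheme}: fix a small ball $B$ around $p$ on which $f^\pi$ is uniformly expanding and conformal up to bounded distortion; since pre-orbits of $p$ are dense, for every small ball $B'\subset M$ there is an inverse branch of some iterate $f^{-n}$ sending $B$ diffeomorphically into $B'$ with controlled (exponentially small) diameter. Iterating, one produces a full-branch expanding induced map $F=f^{R}$ defined on a countable union of domains $\bigcup_j \Delta_j$ covering a full-measure (indeed open-dense) subset of $M$, with bounded distortion and integrable return time $R$; this is essentially a Gibbs--Markov structure. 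The construction of uncountably many distinct such measures comes from the freedom in the combinatorics: one can prescribe, for a Cantor set of symbolic itineraries, which sequence of inverse branches is followed, yielding uncountably many non-conjugate induced systems, each carrying its own equilibrium state.

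Finally, I would run the standard machinery for Gibbs--Markov maps: for each induced full-branch expanding map $F$ with bounded distortion, the transfer operator acting on a space of locally H\"older (or bounded-variation) densities has a spectral gap, producing a unique absolutely continuous $F$-invariant probability $\nu$ with exponential decay of correlations for H\"older observables; pulling back/saturating $\nu$ along the return-time function (Kac's formula) gives an $f$-invariant ergodic probability $\mu$. Full support follows because the induced domains are dense and $f$ is transitive; the measure is expanding because $F$ is uniformly expanding and the return time is integrable, so by the Abramov and chain-rule relations all Lyapunov exponents of $\mu$ are positive; and exponential decay of correlations for $(f,\mu)$ is inherited from that of $(F,\nu)$ by the Young-tower argument (the return time has exponential, in fact here one can arrange better than polynomial, tails). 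Running this for the uncountable family of distinct induced schemes from the previous paragraph, and checking that distinct symbolic prescriptions give measures with different supports of the "first return" data or different entropies, yields uncountably many mutually distinct ergodic expanding measures with full support and exponential decay of correlations, completing the proof.
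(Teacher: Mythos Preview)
Your overall architecture matches the paper's---produce a periodic source with dense pre-orbit via a BDP-type perturbation, then build an induced full-branch expanding map and extract measures via a Young-tower argument---but two steps fail as written. First, you cannot use Baire to upgrade ``arbitrarily close to conformal'' to ``exactly conformal'': having a periodic point whose return derivative is an exact homothety is a nowhere-dense condition, not a $G_\delta$-dense one. The paper does not need this. By \cite[Theorem~4.3]{LP} every $f\in\mathcal{SRT}^*$ is volume expanding, so the BDP-perturbed homothety is automatically a \emph{source}, and ``admits a periodic source'' is an \emph{open} condition---so one gets open-and-dense directly, and intersecting with $\mathcal R_0$ gives the residual set (this is exactly Proposition~\ref{prop2.1}). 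Note also that item~(3) of Theorem~\ref{thm:preorbits} only guarantees dense pre-orbit for a residual subset of $M$, which need not contain your source $p$; this is precisely why the theorem is set in $\mathcal{SRT}^*$, where \emph{every} point has dense pre-orbit by definition.

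Second, and more seriously, your transfer-operator step explicitly invokes bounded distortion for the induced map $F$. But $f$ is only $C^1$, so H\"older control of the Jacobian is unavailable and the standard Ruelle--Perron--Frobenius/spectral-gap machinery on H\"older or BV densities does not apply. The paper avoids distortion altogether: it constructs a \emph{single} induced full-branch Markov map $F:\Delta\to\Delta$ with countable branch set $\mathcal P$ (via the zooming-times formalism of \cite{Vilton}, see Lemma~\ref{LemmaExpandingPeriodicPreOrbit} and Theorem~\ref{Theorem8769}), and then for each probability vector $a=(a_P)_{P\in\mathcal P}$ with $\sum_P a_P R(P)<\infty$ takes the \emph{Bernoulli} measure $\nu_a$ determined by $\nu_a(P)=a_P$. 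These have constant Jacobian on cylinders by construction, so no distortion estimate is needed; the spread measure $\mu_a$ is then $f$-invariant, ergodic, fully supported, expanding (since $\nu_a$-a.e.\ point is $(\alpha,\delta)$-zooming with contraction $(1/8)^n$), and has exponential decay of correlations by \cite{Young} whenever $(a_P)$ is chosen so that $\nu_a(R\ge n)$ decays exponentially. The uncountable family thus comes from the uncountably many admissible weights on one induced scheme, not from uncountably many induced schemes as you propose.
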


Some comments are in order. We note that, in opposition to the case of diffeomorphisms
discussed in \cite{BDP}, there are open sets of local diffeomorphisms with robust non-existence
of splitting (see Section~\ref{sec5} below). Moreover, there are open subsets of robustly transitive
local diffemorphisms that do not admit invariant expanding measures, e.g. hyperbolic endomorphisms
on $\mathbb T^n$. Clearly, these examples admit some non-trivial invariant subbundles robustly.
The  following proposition, which is interesting by itself, plays a key role  for the proof of Theorem~\ref{thm:measures}.

\begin{mainproposition}\label{prop2.1}
There exists a $C^1$ residual subset $\mathcal R_1\subset
\mathcal{SRT}^*$ such that for every $f\in \mathcal R_1$ the set of hyperbolic periodic points is dense
and it admits a periodic source with dense pre-orbit.
\end{mainproposition}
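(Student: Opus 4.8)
The plan is to combine a standard $C^1$-generic argument for density of hyperbolic periodic points with a perturbation argument producing a source having a dense pre-orbit. First I would invoke the $C^1$ Closing Lemma together with the generic dichotomy results for $C^1$ maps: on a residual subset $\mathcal R'\subset \mathcal{SRT}^*$ every periodic point is hyperbolic (Kupka--Smale for endomorphisms) and, since every $f\in\mathcal{SRT}^*$ is transitive, the nonwandering set is all of $M$, so by the Closing Lemma periodic points are dense. This handles the first conclusion and gives a robust setting in which every element of the residual set is transitive with all points having dense pre-orbits. The remaining task is the existence of a periodic source with dense pre-orbit; I would state this as the existence of a dense $G_\delta$ of maps possessing such a source and take the intersection with $\mathcal R'$.

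For the source, the key observation is that $f\in\mathcal{SRT}^*$ has no dominated splitting in a robust way, so by the Bonatti--D\'iaz--Pujals mechanism (the non-invertible version, or rather its contrapositive) one cannot have uniform domination along periodic orbits; after an arbitrarily small $C^1$ perturbation one finds a periodic point $p$ all of whose Lyapunov exponents are positive, i.e. a periodic source. Concretely, I would argue: pick a periodic point $q$ (dense by the previous step); if $Df^{\mathrm{per}(q)}(q)$ is not already expanding, use the absence of robust domination on the orbit of $q$ to perturb $Df$ along the orbit (Franks' lemma for endomorphisms) so as to rotate/mix the contracting and expanding directions and then to expand the weakest eigenvalue, turning $q$ into a source while keeping it periodic. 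This must be done inside $\mathcal{SRT}^*$, which is open, so small perturbations stay in the class.

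To get the dense pre-orbit of the source, I would use that the perturbation can be localized away from a chosen preimage branch: since $f\in\mathcal{SRT}^*$ every point has dense pre-orbit, in particular so does $p$ for the original map; as the source $p$ persists under further small perturbations and its continuation varies continuously, and since having a dense pre-orbit is (on this residual set) automatic for every point, the continuation $p_g$ of the source for nearby $g$ in the residual set also has dense pre-orbit. Thus the set of maps admitting a periodic source with dense pre-orbit contains an open set, and intersecting with $\mathcal R'$ and with the residual set from Theorem~A (to ensure all points have dense pre-orbits) yields the desired residual subset $\mathcal R_1$.

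The main obstacle is the perturbation turning a periodic saddle into a source while remaining in $\mathcal{SRT}^*$: one must ensure that the Franks-type perturbation along the periodic orbit can be realized by an actual $C^1$-small perturbation of the local diffeomorphism (controlling that no critical points are created, i.e. staying among local diffeomorphisms) and that robust transitivity and robust absence of splitting are not destroyed. Since $\mathcal{SRT}^*$ is open this last point is free once the perturbation is $C^1$-small, so the real work is the local realization lemma; I expect this to follow from a careful adaptation of Franks' lemma to the endomorphism setting, together with the fact that absence of robust domination on $\mathcal O_f(q)$ gives enough room in the cocycle $Df$ along the orbit to reach an expanding conjugacy class.
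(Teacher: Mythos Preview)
Your outline follows the paper's strategy closely: use the closing lemma and Kupka--Smale for the density of hyperbolic periodic points, produce a periodic source by a BDP-type perturbation, note that membership in $\mathcal{SRT}^*$ gives the source a dense pre-orbit for free, and use robustness of the source to pass to an open-and-dense set. The structural match is good.

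There is, however, a real gap in the step where you turn the periodic point into a source. You write that one should ``rotate/mix the contracting and expanding directions and then expand the weakest eigenvalue.'' The mixing step is indeed the content of \cite[Lemma~6.1]{BDP}: absence of a robust splitting lets one perturb so that $Df^k(p)$ becomes a \emph{homothety}. But a homothety is not automatically expanding, and your ``then expand the weakest eigenvalue'' is not in general achievable by a $C^1$-small perturbation: if $|\det Df^k(p)|$ were close to or below $1$, pushing all eigenvalues above $1$ in modulus could require an arbitrarily large perturbation. The paper closes this gap with a structural fact you do not invoke: any robustly transitive local diffeomorphism with no splitting in a $C^1$-robust way is \emph{volume expanding}, i.e.\ $|\det Df|>\sigma>1$ everywhere (\cite[Theorem~4.3]{LP}, quoted in the paper as Theorem~\ref{teo0}). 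Once $Df^k(p)=\lambda\,\mathrm{Id}$ and $|\lambda|^{\dim M}=|\det Df^k(p)|>\sigma^k>1$, the point $p$ is forced to be a source with no further perturbation. This volume-expanding ingredient is what your plan is missing.

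(An alternative route, which neither you nor the paper spells out, is to argue that if the homothety had $|\lambda|\le 1$ then a further tiny perturbation would create a periodic sink, contradicting robust transitivity. But this is not the mechanism you describe.)

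A minor point: you assert that $\mathcal{SRT}^*$ is open. The paper only claims that $\mathcal{RT}^*$ is open and that $\mathcal{SRT}^*$ has non-empty interior; the argument is carried out in the induced topology on $\mathcal{SRT}^*$, which suffices since the existence of a periodic source is a $C^1$-open condition in the ambient space of local diffeomorphisms.
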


\begin{remark}
The statements of Theorem~\ref{thm:measures} is a consequence of Theorem~\ref{Theorem8769} together
with Proposition~\ref{prop2.1} above. Moreover, one expects that this proposition holds $C^1$-generically in
$\mathcal{RT}^*$  provided a counterpart of the connecting lemma
(see  \cite{Hay, Wen2}) is given for local diffeomorphisms.
\end{remark}

Concerning our results it is an interesting question to understand if
there are robustly transitive local diffeomorphisms such that the set of periodic
saddle points is dense while the set of periodic sources is non-empty.

The paper is organized as follows. In Section~\ref{sec3} we present
some definitions and prove auxiliary lemmas. In
Section~\ref{sec4} we prove the main results stated in Section~\ref{sec2}.
In Section~\ref{sec5} we present a large class of robustly transitive
local diffeomorphisms which are not uniformly expanding and
exhibit good ergodic properties. Finally, in Section~\ref{sec6} we do
some further comments concerning the  existence of re\-le\-vant expanding measures
assuming the presence of some
kind of dominated splitting.

\section{Robust transitivity and limit sets} \label{sec3}

In this section we prove some preliminary results relating robust
transitivity and existence of dense pre-orbits that play a key role in the
proof of the main results. For that purpose we shall introduce first some definitions.
%
%
Given $\delta>0$, we say that $U\subset M$ is \emph{$\delta$-dense} if $ M \subset \bigcup_{x\in U}B_{\delta}(x)$,
where $B_\delta(x)$ stands for the ball of radius $\delta$ around $x$.
For any endomorphism $f: M \to M$ and $x\in M$,  the \emph{$\omega$-limit set} of a point $x$,
denoted by $\omega_{f}(x),$ is the set of points $y\in M$ such that there exists a
sequence $(n_k)_{k\in \mathbb{N}}$ of positive integers such that $f^{n_k}(x)\rightarrow y$
when $k$ goes to infinity.
Analogously, 
the
\emph{$\alpha$-limit set} of $x$, denoted by $\alpha_{f}(x)$, is the set of accumulation points  $y\in M$
by the pre-orbit of $x$, that is, there exists a sequence $(x_{n_k})_{k\in \mathbb{N}}$ in $\co^{-}_{f}(x)$
satisfying $f^{n_k}(x_{n_k})=x$ and such that $x_{n_k}\rightarrow y$ when $k$ goes to infinity.
Clearly, $\omega_{f}(x)=M$ if and only if the forward orbit of $x$ is dense, and analogous statement also
holds for pre-orbits.
The following lemmas provide a dichotomy of the limit sets for continuous endomorphisms.

\begin{lemma}[Dichotomy of Transitivity]
If $f\in C^{0}( M, M)$ then only one of the following properties hold: either $\omega_{f}(x)\ne M$ for all $x\in M$,
or  the set $\{x\in M \colon \omega_{f}(x)= M\}$ is a residual subset of $M$. 
\end{lemma}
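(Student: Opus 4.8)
The plan is to show that the set
$$
T=\{x\in M:\ \omega_f(x)=M\}
$$
is always a $G_\delta$ subset of $M$, and that as soon as $T$ is non-empty it is automatically dense; since $M$ is a Baire space, these two facts immediately yield the stated dichotomy.

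First I would record the $G_\delta$ structure. As $M$ is a compact metric space it is second countable, so fix a countable basis $\{V_n\}_{n\in\mathbb{N}}$ of non-empty open sets. For each $n$ set $A_n=\bigcup_{k\ge 0}f^{-k}(V_n)$; continuity of $f$ makes every $f^{-k}(V_n)$ open, hence $A_n$ is open. A point $x$ has dense forward orbit if and only if $\mathcal{O}_f^+(x)$ meets every $V_n$, i.e. if and only if $x\in\bigcap_n A_n$. Using the equivalence already noted in the text ($\omega_f(x)=M$ iff the forward orbit of $x$ is dense), we get $T=\bigcap_n A_n$, so $T$ is $G_\delta$.

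Next comes the propagation step, which is the heart of the argument. Assume $T\ne\emptyset$ and pick $x_0\in T$. Directly from the definition of the $\omega$-limit set one has $\omega_f(f(x))=\omega_f(x)$ for every $x$ (the closed sets $\overline{\{f^n(x):n\ge N\}}$ decrease with $N$, so dropping the term $N=0$ leaves the intersection unchanged); inductively, $\omega_f(f^k(x_0))=\omega_f(x_0)=M$ for all $k\ge 0$, hence the entire orbit $\mathcal{O}_f^+(x_0)\subseteq T$. Since $\omega_f(x_0)=M\subseteq\overline{\mathcal{O}_f^+(x_0)}$, the orbit $\mathcal{O}_f^+(x_0)$ is dense in $M$, and therefore so is $T$.

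To conclude, $M$ is a complete metric space, hence a Baire space. If $T=\emptyset$ we are in the first alternative, $\omega_f(x)\ne M$ for all $x$. If $T\ne\emptyset$, the previous paragraph shows $T$ is a dense $G_\delta$ subset of a Baire space, hence residual, which is the second alternative. The two alternatives are mutually exclusive because a residual subset of a Baire space is dense, in particular non-empty. I expect the only delicate point to be the bookkeeping in the propagation step — verifying that membership in $T$ passes along the forward orbit, so that one transitive point already forces a dense (and thus residual) set of them — but this is elementary and presents no real obstacle.
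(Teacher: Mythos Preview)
Your proof is correct. Both arguments establish that the set $T=\{x:\omega_f(x)=M\}$ is a dense $G_\delta$ once it is non-empty, but they use different decompositions. The paper parametrizes by metric scale: it sets $M_n=\{x:\mathcal{O}_f^+(x)\text{ is }1/n\text{-dense}\}$, fixes a transitive point $p$, and uses continuity on finite orbit segments $\{p_\ell,\ldots,f^{k_{n,\ell}}(p_\ell)\}$ to produce explicit balls $B_{r_{n,\ell}}(p_\ell)\subset M_n$ whose union over $\ell$ is open and dense. You instead parametrize by a countable basis, writing $T=\bigcap_n\bigcup_{k\ge0}f^{-k}(V_n)$, which makes the $G_\delta$ structure immediate, and obtain density from the observation that the entire forward orbit of any $x_0\in T$ already lies in $T$. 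Your route is slicker --- openness of each $A_n$ is automatic and no $\varepsilon/2$ bookkeeping is needed --- while the paper's more hands-on approach sets up exactly the template that is reused in the proof of the next lemma on $\alpha$-limit sets, where explicit control of finite orbit pieces (and avoidance of the critical set) is what makes the argument go through.
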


\begin{proof}
Let us suppose that there exists $p\in M$ such that $\co^{+}_{f}(p)$
 is dense in $ M,$ otherwise we are done. Write $p_{\ell}=f^{\ell}(p)$.
Given $n\ge 1$ consider the set $ M_{n}=\{x\in M : \co^{+}_{f}(x)$
is $1/n$-dense$\}$.
By assumption, for each $\ell\in\NN$ and $n\in\NN$, there is some $k_{n,\ell}$ such that
$\{p_{\ell},\cdots,f^{k_{n,\ell}}(p_{\ell})\}$ is $1/2n$-dense.
Moreover, by continuity of $f$ there exists $r_{n,\ell}>0$ such
that $f^{j}(B_{r_{n,\ell}}(p_{\ell}))\subset
B_{1/2n}(f^{j}(p_{\ell}))$ for all $0\le j\le k_{n,\ell}$ and, consequently,
for any $y\in B_{r_{n,\ell}}(p_{\ell})$ it follows that the finite piece of orbit
$\{y,\cdots,f^{k_{n,\ell}}(y)\}$ is $1/n$-dense.
Therefore $\bigcup_{\ell\in\NN}B_{r_{n,\ell}}(p_{\ell})\subset
M_{n}$ is a open and dense set. In particular this proves that
$\bigcap_{n\in\NN}\bigcup_{\ell\in\NN}B_{r_{n,\ell}}(p_{\ell})$ is
a residual subset contained in $\bigcap_{n\in\NN} M_{n}=\{x\in M
:\omega_{f}(x)= M\}$, proving the lemma.
\end{proof}

Given a continuous endomorphism
$f\in C^{0}( M, M)$ we denote by $\mathcal{C}_f$ the set of critical
points of $f$, that is, $x\in \mathcal{C}_f$ if for all $r\!>\!0$ the restriction
$f|_{B_{r}(x)}$ is not a homeomorphism.
The next result relates forward and backward limit sets.

\begin{lemma}\label{le:alpha}
Let $f\in C^{0}( M, M)$ be such that the critical
region $ \mathcal{C}_f$ has empty interior.
If $f$ is transitive then $\{x\in M:
\alpha_{f}(x)=\omega_{f}(x)= M\}$ is a residual subset of $M$.
\end{lemma}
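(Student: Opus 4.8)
The plan is to reduce the statement to the Dichotomy of Transitivity lemma applied to both $f$ (for the $\omega$-limit part) and to some inverse-branch structure (for the $\alpha$-limit part), and then intersect the two resulting residual sets. Since a residual set is preserved under finite, and in fact countable, intersection, it suffices to prove separately that $\{x : \omega_f(x) = M\}$ and $\{x : \alpha_f(x) = M\}$ are each residual in $M$. The first is immediate from the Dichotomy of Transitivity lemma, because $f$ is transitive by hypothesis, so the "bad" alternative (that $\omega_f(x)\neq M$ for all $x$) fails, forcing the residual alternative.

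The substance of the argument is therefore the $\alpha$-limit set. First I would establish the analogue of the density-propagation step from the Dichotomy lemma, but run backwards. Fix a point $p$ whose forward orbit is dense (which exists since $f$ is transitive). The key observation is that, because $\mathcal{C}_f$ has empty interior, every nonempty open set $V\subset M$ contains a point $q$ near which $f$ is a local homeomorphism, hence has a well-defined local inverse branch on a small ball; iterating, one can follow inverse branches along pieces of orbit that avoid the (nowhere dense, but possibly complicated) critical region. Concretely, for each $n\ge 1$ I would define $M_n^- = \{x \in M : \co^-_f(x)\text{ is }1/n\text{-dense}\}$ and aim to show each $M_n^-$ contains an open dense set; then $\bigcap_n M_n^- \subset \{x : \alpha_f(x)=M\}$ is residual. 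To produce, given a target ball $B_{1/n}(z)$, a preimage of $x$ inside it: use density of the forward orbit of $p$ to find $\ell$ with $f^\ell(p)$ close to $x$, and find $m$ with $f^m(p)$ close to $z$; if one can arrange the finite orbit segment from $f^m(p)$ to $f^\ell(p)$ to stay away from $\mathcal{C}_f$, then pulling $x$ back along the inverse branches of that segment lands a preimage of $x$ near $f^m(p)$, hence in $B_{1/n}(z)$, and this works for $x$ ranging over a whole open neighborhood, giving openness and (as $z$ is arbitrary) density of $M_n^-$.

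The main obstacle is precisely the critical region: a priori a relevant finite orbit segment of $p$ might repeatedly hit $\mathcal{C}_f$, so inverse branches need not exist along it. The honest fix is to exploit that $\mathcal{C}_f$ is closed with empty interior, so its complement is open and dense; by transitivity one has enormous freedom in choosing \emph{which} returns of the orbit of $p$ to use, and one should choose times $m,\ell$ so that $f^m(p),\dots,f^\ell(p)$ all lie in $M\setminus\mathcal{C}_f$ — or, more robustly, pass to the open dense set $M \setminus \bigcup_{j\ge 0} f^{-j}(\mathcal{C}_f)$ of points whose entire forward orbit avoids critical points (this set is residual, being a countable intersection of open dense sets, using that each $f^{-j}(\mathcal{C}_f)$ is closed with empty interior since $f$ is continuous and open off $\mathcal{C}_f$), and run the construction with such a $p$. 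One must also track uniform continuity: having fixed the inverse-branch segment, a small ball around $x$ is carried by the composed inverse branch into a small neighborhood of the chosen preimage, which is what yields that an open set of points $x$ has a $1/n$-dense pre-orbit. Assembling these observations, one concludes $\{x : \alpha_f(x)=M\}$ is residual, intersects it with the residual set $\{x : \omega_f(x)=M\}$, and the lemma follows.
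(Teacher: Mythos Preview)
Your proposal is correct and follows essentially the same route as the paper: the paper likewise picks $p$ in the residual set $\{x:\omega_f(x)=M\}\cap\bigcap_{j\ge0}f^{-j}(M\setminus\mathcal{C}_f)$, fixes $k_n$ so that $\{p,\dots,f^{k_n}(p)\}$ is $\tfrac{1}{2n}$-dense, and then uses that $f^j|_{B_{r_j}(p)}$ is a homeomorphism (since the forward orbit avoids $\mathcal{C}_f$) to exhibit $f^j(B_{r_j}(p))$, $j\ge k_n$, as an open dense subset of your $M_n^-$. Your ``more robust'' fix of choosing $p$ with entire forward orbit off $\mathcal{C}_f$ is exactly what the paper does from the outset, and your justification that each $f^{-j}(\mathcal{C}_f)$ has empty interior (via openness of $f$ off $\mathcal{C}_f$) is the right reason this works.
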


\begin{proof}
The proof mimics the previous lemma with some care with the critical set $\mathcal{C}_f$.
Since $ M\setminus \mathcal{C}_f$ is open and dense, 
then $\bigcap_{j\ge0}f^{-j}( M\setminus \mathcal{C}_f)$ is
residual. It follows from the previous lemma that the intersection
$$
\big\{x\in M\, : \,\omega_{f}(x)= M\big\}
	\,\cap\,\bigg(\bigcap_{j\ge0}f^{-j}( M\setminus \mathcal{C}_f)\bigg)\ne\emptyset
$$
is also a residual subset of $M$. Pick $p\in M$ such that
$\omega_{f}(p)= M$ and $\co^{+}_{f}(p)\cap
\mathcal{C}_f=\emptyset$, and write $p_{j}=f^{j}(p)$. 
%
Given $n\ge 1$ 
consider $A_{n}=\{x\in M : \co^{-}_{f}(x)$ is $1/n$-dense$\}$.
Since $\co^{+}_{f}(p)$ is dense, there exists $k_{n}\in\NN$ such
that $\{p_{0},\cdots,p_{k_{n}}\}$ is $1/2n$-dense. As
$\{p_{0},\cdots,p_{k_{n}}\}\subset\co^{-}_{f}(p_{j})$ for all
$j\ge k_{n},$ we get that $p_{j}\in A_{n}$ for every $j\ge k_{n}$.
Now, using  that $\co_{f}^{+}(p)\cap \mathcal{C}_f=\emptyset,$ 
for each $j\ge k_{n}$ one can find
$r_{j}>0$ such that $f^{j}|_{B_{r_{j}}(p)}$ is a homeomorphism and
$f^{\ell}(B_{r_{j}}(p))\subset B_{\frac{1}{2n}}(f^{\ell}(p))$ for
all $0\le\ell\le j$.
This proves that $\co_{f}^{-}(y)$ is $1/n$-dense for any $y\in
f^{j}(B_{r_{j}}(p))$ and $j\ge k_{n}$. As $f^{j}(B_{r_{j}}(p))$ is
an open neighborhood of $p_{j}$ and $\{p_{j};j\ge k_{n}\}$ is
dense, then $\bigcup_{j\ge k_{n}}f^{j}(B_{r_{j}}(p))\subset A_{n}$
is an open and dense subset of $ M$. Therefore,
$\bigcap_{n\in\NN}\bigcup_{j\ge k_{n}}f^{j}(B_{r_{j}}(p))\subset
\bigcap_{n\in\NN}A_{n}$ is a residual subset.
Since $\bigcap_{n\in\NN}A_{n}=\{x\in M : \alpha_{f}(x)= M\},$ we
notice that $\{x\in M : \alpha_{f}(x)= M\} \cap \{x\in M :
\omega_{f}(x)= M\}$ is a residual subset in $M$. This finishes the
proof of the lemma.
\end{proof}

In particular we obtain the following immediate consequence:

\begin{corollary}\label{cor:dense}
If $f\in\mathcal{RT}$ then there exists a residual subset of
points in $M$ with dense orbit and pre-orbit.
\end{corollary}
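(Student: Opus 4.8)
The plan is to reduce the statement immediately to Lemma~\ref{le:alpha}. First I would observe that any $f\in\mathcal{RT}$ is in particular transitive: by definition of robust transitivity there is a $C^1$ open neighborhood $\mathcal U(f)$ all of whose elements are transitive, and of course $f\in\mathcal U(f)$. Next, since by definition the elements of $\mathcal{RT}$ are local diffeomorphisms, $f$ has no critical points whatsoever, so the critical region $\mathcal C_f$ is empty and in particular has empty interior. Thus both hypotheses of Lemma~\ref{le:alpha} are satisfied.

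Applying Lemma~\ref{le:alpha}, the set $\{x\in M:\alpha_f(x)=\omega_f(x)=M\}$ is a residual subset of $M$. By the elementary equivalences noted after the definitions of the $\omega$- and $\alpha$-limit sets, the condition $\omega_f(x)=M$ is equivalent to the forward orbit $\co_f^{+}(x)$ being dense in $M$, and $\alpha_f(x)=M$ is equivalent to the pre-orbit $\co_f^{-}(x)$ being dense in $M$. Hence this residual set consists precisely of the points whose forward orbit and whose pre-orbit are both dense, which is exactly the claimed conclusion.

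I do not expect any real obstacle here: the corollary is a direct specialization of Lemma~\ref{le:alpha} to the setting of local diffeomorphisms, where the critical set is automatically empty, combined with the trivial remark that robust transitivity implies transitivity. In fact the same argument shows that the hypothesis can be weakened to $f$ being merely a transitive local diffeomorphism (or, more generally, a transitive continuous endomorphism whose critical region has empty interior).
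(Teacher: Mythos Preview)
Your proof is correct and matches the paper's own reasoning: the corollary is stated there as an ``immediate consequence'' of Lemma~\ref{le:alpha}, and you have spelled out precisely the two observations (robust transitivity implies transitivity; local diffeomorphisms have empty critical set) that make the application of that lemma immediate.
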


In fact, a converse result also holds obtaining that robust density of points with
dense pre-orbit is equivalent to robust transitivity for local diffeomorphisms.

\begin{lemma}
Let $\mathcal{U}$ be an open subset of the space of  $C^1$ local
diffeomorphisms and assume that every $f\in\mathcal U$ admits a
dense set of points with dense pre-orbit. Then every $f\in
\mathcal U$ is robustly transitive, that is, $\mathcal U \subset
\mathcal{RT}$.
\end{lemma}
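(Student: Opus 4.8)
The plan is to reduce everything to transitivity: I would show that \emph{every} $f\in\mathcal U$ is transitive, and then observe that, since $\mathcal U$ is by hypothesis a $C^1$ open set, $\mathcal U$ itself is a $C^1$ open neighbourhood of each of its points in which every map is transitive; this is exactly the definition of $C^1$ robust transitivity, so $\mathcal U\subset\mathcal{RT}$.

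To prove transitivity, fix $f\in\mathcal U$ and let $D\subset M$ be a dense set all of whose points have dense pre-orbit. Since $M$ is a compact metric space it is second countable, so I would fix a countable basis $\{V_j\}_{j\in\NN}$ of nonempty open subsets of $M$ and set $W_j=\bigcup_{n\ge 0}f^{-n}(V_j)$, which is open. The key step is to check that each $W_j$ is dense in $M$: given an arbitrary nonempty open $U\subset M$, I would pick $x\in D\cap V_j$ (possible because $D$ is dense), and then, using that $\co^{-}_{f}(x)$ is dense, find $w\in U$ and $n\ge 0$ with $f^{n}(w)=x\in V_j$; this gives $w\in U\cap f^{-n}(V_j)\subset U\cap W_j$, so $W_j$ meets every nonempty open set.

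By Baire's theorem the set $\bigcap_{j\in\NN}W_j$ is residual, hence nonempty. Any point $y$ in this intersection satisfies, for each $j$, $f^{n_j}(y)\in V_j$ for some $n_j\ge 0$; thus $\co^{+}_{f}(y)$ intersects every basic open set and is therefore dense, i.e. $\omega_{f}(y)=M$. Hence $f$ is transitive, and since $f\in\mathcal U$ was arbitrary the reduction above completes the proof. (Alternatively, once the density of each $W_j$ is established one may simply invoke the Dichotomy of Transitivity lemma to rule out the case $\omega_f(x)\ne M$ for all $x$, but the direct Baire argument is equally short.)

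I do not expect a real obstacle here: the statement is a soft topological-dynamics fact and the proof is a routine Baire-category argument. The only points requiring a word of care are the existence of a countable basis (immediate on a compact metric space) and the remark that the openness of $\mathcal U$ is precisely what turns plain transitivity of every $g\in\mathcal U$ into robust transitivity of each $f\in\mathcal U$.
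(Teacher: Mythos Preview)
Your argument is correct. The paper itself does not prove this lemma at all---it simply states that the proof is simple and leaves it as an exercise---so your Baire-category argument is a perfectly valid way to fill in the omitted details, and there is nothing further to compare.
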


\begin{proof}
Since the proof is simple we leave it as an easy exercise for the reader.
\end{proof}

Let us mention that expanding endomorphisms are not the only class of maps
satisfying the assumptions of the previous lemma.
In Section~\ref{sec5} we present a class of robustly transitive local diffeomorphisms that are not uniformly
expanding but for which 
there exists a generic subset of points with dense pre-orbit.

\section{Proof of the main results}\label{sec4}

This section is devoted to the proof of our main results.

\subsection{Proof of Theorem~\ref{thm:preorbits}}

Items (1) and (2) are a consequence of the $C^1$ closing lemma
for local diffeomorphisms (see e.g.~\cite{Castro, Mor, Wen}) and Kupka-Smale theorem for
 local diffeomorphisms. Hence, there exists a residual subset
$\mathcal R_0\subset \mathcal{RT}$ such that for every
$f\in\mathcal R_0$  holds that $\overline{\text{Per}_h(f)}=\Omega(f)=M$, where $\text{Per}_h(f)$
denotes the set of hyperbolic periodic points for $f$.
So,  we are left to prove the existence of dense pre-orbits for a generic
subset of robustly transitive local diffeomorphisms.
Using Corollary~\ref{cor:dense} it follows  that every $f\in\mathcal R_0$
satisfies property (3). This finishes the proof of the theorem.

\subsection{Proof of Proposition~\ref{prop2.1}}


Fix $f_0\in \mathcal{SRT}^*$. The first step is to recall that
$f_0$ is volume expanding, that is, $|\det
(Df_0)|\!>\!\sigma\!>\!1$. This follows from adapting the arguments
used by Bonatti, D\'iaz and Pujals ~\cite{BDP} in the invertible setting, as
we can see in the following theorem.

\begin{theorem}\cite[Theorem~4.3]{LP}\label{teo0}
Let $f$ be a $C^1$ local diffeomorphism and $U$ open set
in $M$ such that
$\Lambda_f(U)=\bigcap_{n\in\mathbb{Z}}f^n(\overline{U})$ is $C^1$
robustly transitive set and it has no splitting in a $C^1$ robust
way. Then $f$ is volume expanding.
\end{theorem}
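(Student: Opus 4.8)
The plan is to argue by contradiction, adapting to the non-invertible setting the scheme of Bonatti--D\'iaz--Pujals~\cite{BDP}. Recall that $f$ is volume expanding on $\Lambda=\Lambda_f(U)$ precisely when $\min_{\mu}\int\log|\det Df|\,d\mu>0$, the minimum being taken over the $f$-invariant probabilities supported in $\Lambda$ (this follows from the variational identity $\min_\mu\int\log|\det Df|\,d\mu=\lim_n\frac1n\min_{x\in\Lambda}\log|\det Df^n(x)|$). So suppose $f$ is \emph{not} volume expanding: there is an ergodic $f$-invariant probability $\mu$ on $\Lambda$ with $\int\log|\det Df|\,d\mu\le 0$. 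I will use arbitrarily $C^1$-small perturbations of $f$ to produce, inside the continuation $\Lambda_g(U)=\bigcap_{n\in\mathbb Z}g^n(\overline U)$, a periodic point $p$ all of whose eigenvalues have modulus strictly less than $1$, i.e.\ a periodic sink of $g$. This contradicts robust transitivity: the basin of such a sink meets $\Lambda_g(U)$ in a nonempty relatively open forward invariant set, each of whose orbits converges to $\mathcal O_g(p)$; hence $\Lambda_g(U)$ carries no dense orbit (the degenerate case $\Lambda_g(U)=\mathcal O_g(p)$ being excluded by the non-triviality built into the notion of a robustly transitive set), while $g$ lies in the robust-transitivity neighbourhood $\mathcal U(f)$ once the perturbation is small enough. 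Note it is essential to produce a sink rather than a source: sources are themselves volume expanding, so creating one would yield no contradiction.

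The sink is manufactured in two steps. First, apply an ergodic closing lemma for $C^1$ local diffeomorphisms (see~\cite{Castro,Mor,Wen}) to $\mu$: given $\eps>0$ there are a $C^1$ $\eps$-perturbation $g_1$ of $f$ and a $g_1$-periodic orbit $\mathcal O_{g_1}(p)\subset\Lambda_{g_1}(U)$ (it lies in $\Lambda_{g_1}(U)$ automatically, being a closed orbit contained in $\overline U$), of period $\pi=\pi(\eps)\to\infty$ as $\eps\to0$, which shadows a long $\mu$-typical segment closely enough that $\frac1\pi\log|\det Dg_1^{\pi}(p)|\le\eps$. Second, since $\Lambda$ --- hence $\Lambda_{g_1}(U)$ for $g_1$ close to $f$ --- admits no nontrivial invariant subbundle of its tangent bundle in a $C^1$ robust way, in particular no dominated splitting of any index, the Ma\~n\'e/Bonatti--D\'iaz--Pujals mechanism (uniform dominated splittings on periodic orbits extend to the whole set, once periodic points are dense; run contrapositively, together with the Bonatti--D\'iaz--Pujals linear-algebra lemma) furnishes, once $\pi$ is large enough relative to $\eps$, a further Franks-type perturbation $g$ of $g_1$ --- supported in pairwise disjoint small balls centred at the distinct points of $\mathcal O_{g_1}(p)$, hence $C^1$-small --- after which all eigenvalues of $Dg^{\pi}(p)$ have a single common modulus $\rho$, necessarily $\rho\le e^{\eps}$ by the Jacobian estimate. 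A last rescaling within Franks' lemma brings $\rho$ below $1$; choosing $\eps$ so small that $g\in\mathcal U(f)$ yields the contradiction, and therefore $f$ is volume expanding.

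The main obstacle is the second step. One has to (i) establish Franks' lemma for the derivative cocycle of a $C^1$ local diffeomorphism over a periodic orbit --- here non-invertibility is harmless, since a $C^1$-small perturbation near the (distinct) orbit points $f^k(p)$ moves each factor $Df(f^k p)$ independently and so moves the product $Df(f^{\pi-1}p)\cdots Df(p)$ to any prescribed nearby linear map; (ii) transfer the robust absence of invariant subbundles of $T_\Lambda M$ to the absence of an $\ell(\eps)$-dominated splitting \emph{along the specific closed orbit delivered by the closing lemma}, that is, couple on a single orbit the control of the Jacobian with the control of domination --- this is the delicate point and relies on the semicontinuity and extension properties of dominated splittings being available for endomorphisms; and (iii) record the elementary fact that an attracting periodic orbit cannot be contained in a (non-trivial) isolated transitive set. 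All three are carried out in~\cite{LP}.
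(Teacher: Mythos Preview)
The paper does not supply its own proof of this statement: it is quoted verbatim as \cite[Theorem~4.3]{LP}, with only the remark that the result ``follows from adapting the arguments used by Bonatti, D\'iaz and Pujals~\cite{BDP} in the invertible setting.'' Your sketch is precisely that adaptation --- closing an ergodic measure with non-positive log-Jacobian to a long periodic orbit, then using the robust absence of dominated splitting plus Franks' lemma to turn the period derivative into a homothety, hence a sink, contradicting robust transitivity --- so it matches what the paper (and \cite{LP}) indicate.

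One small slip to fix: if $\rho$ is the common modulus of the eigenvalues of $Dg^{\pi}(p)$, the Jacobian bound $\frac{1}{\pi}\log|\det Dg^{\pi}(p)|\le\eps$ gives $\rho^{\dim M}=|\det Dg^{\pi}(p)|\le e^{\eps\pi}$, so $\rho\le e^{\eps\pi/\dim M}$, not $\rho\le e^{\eps}$. What the argument actually needs, and what it delivers, is that the \emph{per-iterate} rate $\rho^{1/\pi}$ is at most $e^{\eps/\dim M}$; a last $O(\eps)$-rescaling inside Franks' lemma then forces $\rho<1$. With this correction your outline is sound.
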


Since there is no splitting in a $C^1$-robust way we can proceed
as in \cite[Lemma~6.1]{BDP} to prove that
there exists a $C^1$ local diffeomorphism $f\in \mathcal{SRT^*}$
arbitrarily close to $f_0$ and a periodic point $f^k(p)=p$  such that $Df^k(p)$ is an
homothety.
Moreover, since $f$ satisfies the hypothesis of the theorem above we deduce that $f$ is volume
expanding and, consequently, $p$ is periodic repelling and has a dense pre-orbit.
Since this is a robust property we deduce that there is an open
and dense subset $\mathcal {A}\subset \mathcal{SRT}^*$ such that
every $f\in \mathcal{A}$ has a repelling periodic point. In
particular, if $\mathcal{R}_0$ is given by Theorem~\ref{thm:preorbits}
(adapted $\mathcal{RT}^*$ with same proof) then every map in the  residual
subset $\mathcal{R}_0\cap \mathcal {A} \subset \mathcal{SRT}^*$ has a dense set of hyperbolic
periodic points and at least one periodic repelling point with dense pre-orbit.
This finishes the proof of the proposition.

\subsection{Proof of Theorem \ref{thm:measures}}

In this section we use the notion of zooming times 
to deduce the existence of interesting measures. More precisely, we prove the following:

\begin{theorem}\label{Theorem8769}
If a $C^1$ local diffeomorphism $f$ has a periodic
source with dense pre-orbit then there are uncountable many
invariant, ergodic and expanding measures with full support in $\Omega(f)$
and exponential decay of correlations.  In particular, if $f$
is transitive the measure support is total.
\end{theorem}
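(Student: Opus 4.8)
The plan is to build, in a neighbourhood of the source $p$, an induced map with countably many \emph{full} branches which is uniformly expanding, to read off the desired measures from Bernoulli measures on the associated full shift, and then to transfer the statistical properties back to $f$ by a Kac (tower) construction. Let $k$ be the period of $p$. Since $p$ is a source, there is a ball $\Delta=B_\delta(p)$ with $f^k|_{\overline\Delta}$ injective, $f^k(\Delta)\supseteq\overline\Delta$, and such that the inverse branch $h:=(f^k|_{\Delta})^{-1}\colon\overline\Delta\to\Delta$ is a $\lambda_0$-contraction for some $\lambda_0<1$, with $h(\overline\Delta)\Subset\Delta$ and $h(p)=p$. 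The crucial step — and the one I expect to be the main obstacle — is the following claim, which is where the hypothesis of \emph{dense pre-orbit} enters: for every $q$ in a fixed countable subset $\{q_i\}$ of $\Omega(f)$ which is dense in $\Omega(f)$, and every $\varepsilon>0$, there exist $N\ge1$ and a $C^1$ map $\Phi\colon\overline\Delta\to\Delta$ with $\mathrm{Lip}(\Phi)\le 1/2$, $\mathrm{diam}\,\Phi(\overline\Delta)<\varepsilon$, $f^N\circ\Phi=\mathrm{id}_{\overline\Delta}$, $f^N|_{\Phi(\overline\Delta)}$ a diffeomorphism onto $\overline\Delta$, and whose forward block $\Phi(\overline\Delta),f(\Phi(\overline\Delta)),\dots,f^N(\Phi(\overline\Delta))=\overline\Delta$ meets $B_\varepsilon(q)$. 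To prove this I would use density of the pre-orbit of $p$ to pick $w\in f^{-m}(p)$ with $d(w,q)$ small; using that $w$ lies in the forward invariant open set $O:=\bigcup_{n\ge0}f^n(\Delta)$ (one checks $O$ is dense in $\Omega(f)$, hence contains the relevant pre-images) I write $w=f^s(v)$ with $v\in\Delta$, so $f^{m+s}(v)=p$; taking the inverse branch $g$ of $f^{m+s}$ through $p\mapsto v$ on a small neighbourhood $W\ni p$ and $t$ large enough that $h^t(\overline\Delta)\subset W$, the map $\Phi:=g\circ h^t$, with $N:=tk+m+s$, does the job: $f^N\circ\Phi=\mathrm{id}$, $\Phi$ is as strong a contraction as we like once $t$ is large, and its forward block passes through the small set $f^s(\Phi(\overline\Delta))\ni w\subset B_\varepsilon(q)$.

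Applying the claim with $\varepsilon\to0$ along $\{q_i\}$ and enlarging the exponents $t$ so that the images $\Phi_j(\overline\Delta)$, $j\ge1$, become pairwise disjoint, relatively compact in $\Delta$, of prescribed small diameter and with $\mathrm{Lip}(\Phi_j)\le 1/2$, I obtain an iterated function system $\{\Phi_j\}_{j\ge1}$. Its limit set $\Lambda=\bigcap_{n}\bigcup_{(j_1,\dots,j_n)}\Phi_{j_1}\!\circ\cdots\circ\Phi_{j_n}(\overline\Delta)$ is a Cantor set; since cylinders have diameter $\le 2^{-n}\,\mathrm{diam}\,\overline\Delta$, the coding map $\pi\colon\{1,2,\dots\}^{\mathbb N}\to\Lambda$ is a homeomorphism conjugating the full shift $\sigma$ to the induced map $F\colon\Lambda\to\Lambda$ given by $F=f^{N_j}$ on $\Lambda_j:=\Lambda\cap\Phi_j(\overline\Delta)$, and $\|DF(x)^{-1}\|\le 1/2$, so $F$ is uniformly expanding. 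Note this uses only that the $\Phi_j$ are uniform contractions, which is why $C^1$ regularity of $f$ suffices here.

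Next I would produce the measures. For each pair $(\theta,\rho)\in(0,1)^2$, after relabelling so that $(N_j)$ is non-decreasing, take the probability vector $p_j\propto \theta^{N_j}\rho^{\,j}$; then every $p_j>0$, $\sum_j p_jN_j<\infty$, and $\sum_{j\,:\,N_j>n}p_j\le C\theta^{\,n}$, so the associated Bernoulli measure $\nu=\nu_{(\theta,\rho)}$ on $\Lambda$ is $F$-invariant, ergodic, has $\int R\,d\nu<\infty$ where $R|_{\Lambda_j}=N_j$, and has exponential return-time tails $\nu(R>n)\le C\theta^{\,n}$. Let $\mu=\mu_{(\theta,\rho)}:=\frac1{\int R\,d\nu}\sum_{j\ge1}\sum_{i=0}^{N_j-1}(f^i)_*(\nu|_{\Lambda_j})$ be the $f$-invariant probability given by Kac's formula. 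By standard induced-map arguments $\mu$ is $f$-invariant and ergodic; since $F$ expands by a factor $\ge 2$ the Lyapunov exponents of $\mu$ are $\ge(\log 2)/\!\int R\,d\nu>0$, so $\mu$ is expanding; since every $p_j>0$ we have $\mathrm{supp}\,\nu=\Lambda$, hence $\mathrm{supp}\,\mu=\overline{\bigcup_{j,i}f^i(\Lambda_j)}$, which contains the $q_i$ and the orbit of $p$ and is therefore all of $\Omega(f)$ (and all of $M$ when $f$ is transitive). Finally, by Abramov's formula $h_{\mu_{(\theta,\rho)}}(f)=h_{\nu_{(\theta,\rho)}}(F)/\!\int R\,d\nu$, and varying two of the $p_j$'s in a one-parameter subfamily shows this quantity runs over an interval, so uncountably many of the $\mu_{(\theta,\rho)}$ are pairwise distinct.

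For exponential decay of correlations I would first treat $(F,\nu)$ via its transfer operator with respect to $\nu$, which is $\mathcal L\psi=\sum_j p_j\,(\psi\circ\Phi_j)$: for any fixed $\alpha\in(0,1]$ it satisfies $\|\mathcal L\psi\|_\infty\le\|\psi\|_\infty$ and, for the $\alpha$-Hölder seminorm $[\,\cdot\,]_\alpha$, $[\mathcal L\psi]_\alpha\le\big(\sum_jp_j(\mathrm{Lip}\,\Phi_j)^\alpha\big)[\psi]_\alpha\le 2^{-\alpha}[\psi]_\alpha$, so $\mathcal L^n\psi\to\nu(\psi)$ uniformly at rate $2^{-\alpha n}$ for every $\psi\in C^\alpha$. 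Dualising gives $\big|\int\psi\,(\varphi\circ F^n)\,d\nu-\nu(\psi)\nu(\varphi)\big|=\big|\int(\mathcal L^n\psi-\nu(\psi))\,\varphi\,d\nu\big|\le C\,2^{-\alpha n}\|\psi\|_\alpha\,\|\varphi\|_{L^1(\nu)}$, which is exponential decay for $(F,\nu)$ in the Hölder-versus-$L^1$ sense required. Since $(f,\mu)$ is the tower over $(F,\nu)$ with return time $R$ having exponential tails, I would then invoke standard tower/operator-renewal results (in the spirit of Young's work and of Sarig--Gou\"ezel) to upgrade this to exponential decay of correlations for $(f,\mu)$ with $\psi\in C^\alpha(M,\R)$ and $\varphi\in L^1(\mu)$: a Hölder observable on $M$ lifts to the tower with controlled norm, and the $L^1$ side passes through by the same duality. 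This yields the statement, and when $f$ is transitive $\Omega(f)=M$, so the support is total. The hard part, to reiterate, is the claim in the first paragraph — constructing, for an arbitrary target region, an inverse branch of some iterate of $f$ that strongly contracts the \emph{fixed} ball $\Delta$ and whose forward block passes near the target — together with the verification that the set $O=\bigcup_{n\ge0}f^n(\Delta)$ is dense in $\Omega(f)$; everything after that is a combination of classical inducing-scheme and transfer-operator arguments.
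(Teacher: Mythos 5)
Your overall architecture (a uniformly expanding full-branch induced scheme anchored at the source, Bernoulli measures with exponentially decaying return-time tails, Kac projection, Young-tower decay) is the same as the paper's, and everything downstream of your ``key claim'' is at the paper's level of rigor. The genuine gap is inside the key claim, at the step where you assert that the chosen pre-image $w\in f^{-m}(p)$ with $d(w,q)<\varepsilon$ lies in $O=\bigcup_{n\ge0}f^n(\Delta)$ ``because $O$ is dense in $\Omega(f)$''. Granting the ``hence'' (an open set dense near $\Omega(f)$ does meet the dense pre-orbit of $p$ inside $B_\varepsilon(q)$), the density of $O$ itself is unproved and does not follow from the hypotheses. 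From ``periodic source with dense pre-orbit'' one only gets the \emph{reverse} connection: every nonempty open $V$ contains a point mapping exactly onto $p$, hence $f^{n}(V)\supseteq\Delta$ for some $n$; nothing forces forward images of $\Delta$ to meet every open set touching $\Omega(f)$. In fact, density of $O$ is essentially the transitivity-type property the construction is supposed to deliver, and the theorem does not assume $f$ transitive (transitivity enters only in the final ``in particular''). Without that step you cannot produce the branches $\Phi_j$ whose forward blocks visit prescribed targets $q_i$, and with them falls the full-support claim; note that even producing a second branch beyond $h$ needs a different argument (e.g.\ a point $v\in\Delta\cap\co^-_f(p)$ off the orbit of $p$), and such branches give no control on where their forward blocks go.

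If you add transitivity, your claim is easily repaired without mentioning $O$: pick $s$ with $f^s(\Delta)\cap B_\varepsilon(q)\neq\emptyset$, shrink to a ball $B'\subset\Delta$ with $f^s(B')\subset B_\varepsilon(q)$, use density of $\co^-_f(p)$ to find $v\in B'$ with $f^a(v)=p$, and increase $a$ by a multiple of the period of $p$ so that $a\ge s$; then anchor $\Phi=g\circ h^t$ at $v$ as you do. The paper's proof shows how to avoid the issue altogether: its zooming-time lemma establishes density of suitably returning pre-orbit points only in a small neighbourhood of some $p'$ in the forward orbit of $p$ (a pigeonhole over the finitely many points $q_y\in\co^+_f(p)$, which is why the recentering at $p'$ appears), builds the full-branch induced map from zooming returns to a nested ball around $p'$ via the machinery of Pinheiro's paper, and obtains the support statement only afterwards, from invariance of the support together with transitivity. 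In other words, the ``reach an arbitrary target and then land exactly on $p$'' property is precisely what must not be used if the inducing step is to rest on the dense pre-orbit hypothesis alone; as written, your proposal assumes it at the crucial point.
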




In order to prove the previous result we shall adapt some ideas from~\cite{Vilton}.
In fact, the conclusion above on the existence of many ergodic and expanding probability measures with
exponential decay of correlations has been established in  Proposition~9.3 and Theorem~5 of \cite{Vilton}
 for $C^{1+\alpha}-$maps admitting critical points. The $C^{1+\alpha}-$assumption is used there
 to obtain bounded distortion under the presence of critical points. Here we prove that this condition
 can be relaxed for local diffeomorphisms. Let us introduce the zooming times notion and a useful lemma before proving Theorem \ref{Theorem8769}.

\begin{defi}[Zooming contraction]\label{DefinitionZoomingContration} A sequence $\alpha=\{\alpha_n\}_{n\in\NN}$ of
functions $\alpha_n:[0,+\infty)\to[0,+\infty)$ is called a {\em
zooming contraction} if it satisfies: 
\begin{itemize}
\item $\alpha_n(r)<r,$ for every $r>0$ and $n\ge1$;
\item $\alpha_n(r)\le\alpha_n(\widetilde{r}),$  for every $0\le r\le \widetilde{r}$ and $n\ge1$;
\item $\alpha_n\circ\alpha_m(r)\le\alpha_{n+m}(r),$  for every $r>0$ and $n,m\ge1$;
\item $\displaystyle\sup_{0\le r\le1}\Big(\sum_{n=1}^{\infty}\alpha_n(r)\Big)<\infty$.
\end{itemize}
\end{defi}

Observe that an exponential backward contraction is an example of a zooming
contraction, $\alpha_n(r)=\lambda^n r$ with $0<\lambda<1$.
Let $\alpha=\{\alpha_n\}_{n}$ be a zooming contraction and
$\delta$ a positive constant.

\begin{figure}[h]
\includegraphics[scale=.25]{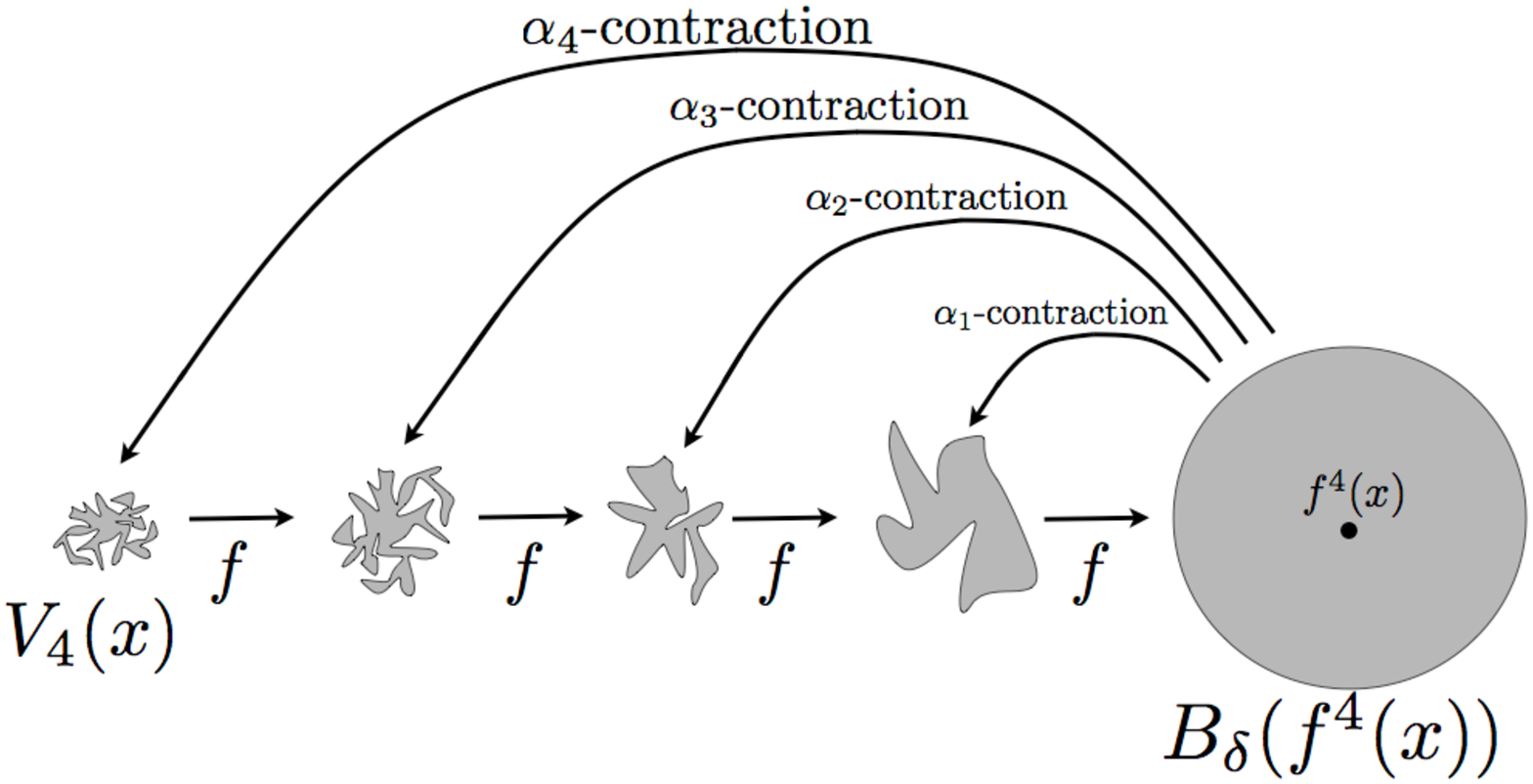}
\caption{A zooming time for
$x\in\tz_4(\alpha,\delta,f)$}\label{FigureZooming_time}
\end{figure}

\begin{defi}[Zooming times]
We say that $n\ge 1$ is a {\em $(\alpha,\delta)$-zooming time} for
$p\in M$, with respect to $f$, if there is a neighborhood $V_{n}(p)$ of $p$ such that
$f^{n}$ sends $\overline{V_n(p)}$  homeomorphically onto $\overline{B_{\delta}(f^{n}(p))}$ and
for all $x,y\in V_{n}(p)$ and $0\le j<n$
$$
\dist(f^j(x),f^j(y))\le\alpha_{n-j}\big(\dist(f^{n}(x),f^{n}(y))\big).
$$
\end{defi}

The ball $B_{\delta}(f^{n}(p))$ is called a {\em zooming ball} and
the set $V_{n}(p)$ is called a {\em zooming pre-ball}. Denote by
$\tz_n(\alpha,\delta,f)$ the set of points of $X$ for which $n$ is
an $(\alpha,\delta)$-zooming time.
A point $x\in M$ is called $(\alpha,\delta)$-zooming (with respect to $f$) if \begin{equation}\label{eqZZ}
\limsup_{n\to\infty}\frac{1}{n}\#\big\{1\le j\le n\,{;}\, x\in {\tz_j(\alpha,\delta,f)}\big\}>0.
\end{equation}
Moreover, a positively invariant set $\Lambda\subset X$ is called a {\em
$(\alpha,\delta)$-zooming set}, with respect to $f$, if  every $x\in\Lambda$ is  $(\alpha,\delta)$-zooming.

\begin{lemma}\label{LemmaExpandingPeriodicPreOrbit}
If $p$ is a periodic repeller with $\alpha_f(p)=M$ then there are $\ell\in\NN$, $\delta>0$ and $\lambda>1$ such that $\co_{f}^{-}(p)$ is a $(\alpha,\delta)$-zooming set with respect to $\widetilde{f}:=f^{\ell}$, where $\alpha=\{\alpha_n\}_n$ is given by
$\alpha_n(x)=(1/8)^n x$. Furthermore, there exists $p'\in\co_f^+(p)$ such that
$$
\bigg\{y\,\in\co_f^-(p')\colon \,\#\{j\in\NN\,;\,y\in Z_j(\alpha,\delta,\widetilde{f}\,)\text{ and }\widetilde{f}^j(y)=p'\}=\infty\bigg\}
$$
is dense in a neighborhood of $p'$.
\end{lemma}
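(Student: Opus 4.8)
The plan is to produce the iterate $\widetilde f=f^\ell$ and the scale $\delta$ first, then to prove that every point of $\co_f^-(p)$ is $(\alpha,\delta)$-zooming for $\widetilde f$, and finally to locate $p'$ by a Baire category argument.

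\textit{Choice of $\widetilde f$ and $\delta$.} Let $k$ be the period of $p$ and put $\mathcal O:=\co_f^+(p)=\{p,f(p),\dots,f^{k-1}(p)\}$, a finite set each point of which is a fixed repeller of $f^k$. Since $(Df^k(q))^{-1}$ has spectral radius $<1$ for each of the finitely many $q\in\mathcal O$, one may fix $m$ so large that, setting $\ell:=mk$ and $\widetilde f:=f^\ell$, the map $\widetilde f$ fixes every $q\in\mathcal O$ and $\|(D\widetilde f(q))^{-1}\|=\|((Df^k(q))^{-1})^m\|<1/16$. Choosing $\delta>0$ below the injectivity radius and small enough, each $q\in\mathcal O$ then carries an inverse branch $g_q$ of $\widetilde f$ defined on $\overline{B_\delta(q)}$, fixing $q$ and contracting distances by the factor $\eta:=1/16<1/8$; shrinking $\delta$ further we may also assume that every ball $B_\delta(q)$ is evenly covered by all iterates $\widetilde f^n$ (recall that $\widetilde f$, being a local diffeomorphism of a compact manifold, is a covering map of $M$). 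Writing $V_n(q):=g_q^{n}(B_\delta(q))$ one sees at once that $\widetilde f^n$ maps $\overline{V_n(q)}$ homeomorphically onto $\overline{B_\delta(q)}$ and that $\dist(\widetilde f^s x,\widetilde f^s y)\le \eta^{n-s}\dist(\widetilde f^n x,\widetilde f^n y)$ for all $x,y\in V_n(q)$ and $0\le s<n$; thus $q\in Z_n(\alpha,\delta,\widetilde f)$ for every $n$, with $\alpha_n(r)=\eta^n r\le(1/8)^n r$. (One may take the $\lambda$ of the statement to be $\eta^{-1}=16$.)

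\textit{$\co_f^-(p)$ is a $(\alpha,\delta)$-zooming set for $\widetilde f$.} First, $\co_f^-(p)\supseteq\mathcal O$ and $\co_f^-(p)$ is $\widetilde f$-positively invariant. Fix $w\in\co_f^-(p)$, say $f^N(w)=p$. For every $n$ with $n\ell\ge N$ we have $\widetilde f^n(w)=f^{n\ell-N}(p)\in\mathcal O$, so, since $\widetilde f$ fixes $\mathcal O$ pointwise, the $\widetilde f$-orbit of $w$ is eventually constant, equal to some $q=q(w)\in\mathcal O$; let $j_0$ be the first time with $\widetilde f^{j_0}(w)=q$ (so $j_0=0$ iff $w\in\mathcal O$). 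For $n>j_0$, let $V_n(w)$ be the connected component of $\widetilde f^{-n}(B_\delta(q))$ containing $w$; even covering makes $\widetilde f^n\colon\overline{V_n(w)}\to\overline{B_\delta(q)}$ a homeomorphism. Since $V_{n-j}(q)$ is connected, contains $q$, and is mapped homeomorphically onto $B_\delta(q)$ by $\widetilde f^{n-j}$, it is precisely the component of $\widetilde f^{-(n-j)}(B_\delta(q))$ through $q$; hence $\widetilde f^{j}(V_n(w))\subseteq V_{n-j}(q)$ for all $j_0\le j<n$, and the estimate of the previous step yields $\dist(\widetilde f^j x,\widetilde f^j y)\le\eta^{n-j}\dist(\widetilde f^n x,\widetilde f^n y)\le(1/8)^{n-j}\dist(\widetilde f^n x,\widetilde f^n y)$ on those iterates. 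For the finitely many iterates $0\le j<j_0$, let $\Psi_j$ be the inverse branch of $\widetilde f^{j_0-j}$ that sends $q$ to $\widetilde f^j(w)$, with (finite) Lipschitz constant $C_j(w)$ on a fixed small ball around $q$; as $\widetilde f^{j_0}(V_n(w))\subseteq V_{n-j_0}(q)$ shrinks to $\{q\}$ when $n\to\infty$, for $n$ large one gets $\dist(\widetilde f^j x,\widetilde f^j y)\le C_j(w)\,\dist(\widetilde f^{j_0}x,\widetilde f^{j_0}y)\le C_j(w)\,\eta^{n-j_0}\dist(\widetilde f^n x,\widetilde f^n y)$, and since $\eta<1/8$ (so that $((1/8)/\eta)^n=2^n\to\infty$) this is $\le(1/8)^{n-j}\dist(\widetilde f^n x,\widetilde f^n y)$ as soon as $n$ exceeds some threshold $N(w)$. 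Thus every $n>\max\{j_0,N(w)\}$ is a $(\alpha,\delta)$-zooming time for $w$, so $w$ is $(\alpha,\delta)$-zooming. I expect this step to be the crux: the uncontrolled initial segment $0\le j<j_0$ of the orbit of $w$ must be absorbed by the surplus contraction $\eta<1/8$ inherited from the repelling periodic orbit, and everything else is bookkeeping with covering sheets.

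\textit{Choice of $p'$ and conclusion.} Since $\alpha_f(p)=M$, $\co_f^-(p)$ is dense in $M$; writing each $w\in f^{-n}(p)$ as an element of $(f^k)^{-a}(f^r(p))$ (with $n+r\equiv0\pmod k$) shows that $\co_f^-(p)=\bigcup_{q\in\mathcal O}\co_{f^k}^-(q)$, a finite union, so by Baire's theorem some $\co_{f^k}^-(q_0)$, $q_0\in\mathcal O$, is dense in a nonempty open set $B'$. Pick $z\in\co_{f^k}^-(q_0)\cap B'$ with $(f^k)^s(z)=q_0$; on a small ball $B_\rho(z)\subseteq B'$ the map $(f^k)^s$ is a diffeomorphism onto a neighborhood of $q_0$, it carries points of $\co_{f^k}^-(q_0)$ to points of $\co_{f^k}^-(q_0)$ (using that $q_0$ is $f^k$-fixed), and, being open, spreads the density of $\co_{f^k}^-(q_0)$ in $B_\rho(z)$ to a ball $B_{\rho'}(q_0)$; hence $\co_{f^k}^-(q_0)$ is dense in the neighborhood $B_{\rho'}(q_0)$ of $q_0$. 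Put $p':=q_0\in\co_f^+(p)$. Because $p'$ is $f^k$-fixed, the sets $(f^k)^{-b}(p')$ increase with $b$, whence $\co_{\widetilde f}^-(p')=\co_{f^k}^-(p')$ is dense in $B_{\rho'}(p')$; moreover, for $y\in\co_{\widetilde f}^-(p')$ the $\widetilde f$-orbit of $y$ is eventually equal to $p'$, so $\widetilde f^j(y)=p'$ for all large $j$, while the previous step gives $y\in Z_j(\alpha,\delta,\widetilde f)$ for all large $j$ as well (note $\co_{\widetilde f}^-(p')\subseteq\co_f^-(p)$). Therefore the set in the statement contains $\co_{\widetilde f}^-(p')$, which is dense in a neighborhood of $p'$, completing the proof.
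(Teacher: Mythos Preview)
Your proof is correct and follows essentially the same route as the paper's: choose $\ell$ a multiple of the period so large that the local inverse branches of $\widetilde f$ near the periodic orbit contract by a factor $\eta<1/8$, show that every $w\in\co_f^-(p)$ has all sufficiently large $\widetilde f$-iterates as $(\alpha,\delta)$-zooming times by absorbing the uncontrolled initial segment into the surplus contraction $\eta<1/8$, and finally decompose the dense set $\co_f^-(p)$ into finitely many pieces indexed by $\co_f^+(p)$, pick one that is somewhere dense, and push it forward by an iterate to obtain density near $p'$. Your treatment is in fact a little more explicit than the paper's on the intermediate-step estimate $\dist(\widetilde f^j x,\widetilde f^j y)\le(1/8)^{n-j}\dist(\widetilde f^n x,\widetilde f^n y)$ for $0\le j<j_0$, which the paper asserts ``by construction'' after bounding only the full contraction of $\widetilde f^n$; your absorption argument with the Lipschitz constants $C_j(w)$ and the factor $2^n=((1/8)/\eta)^n$ fills this in cleanly.
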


\begin{proof}
Let $\gamma=\text{period(p)}$. Since $\co_f^+(p)$ is a finite set there exists $n_0\ge 1$ large so that $\log(\|(Df^{n\,\gamma}(q))^{-1}\|^{-1})>\log 32$
for all $n\ge n_0$ and every $q\in\co_f^+(p)$. Let $\delta>0$ be small enough such that,
for every $q\in\co_f^+(p)$ there is a neighborhood
$W(q)$ of the point $q$ satisfying $f^{n_0\gamma}(W(q))=B_{\delta}(q)$ and
$(f^{n_0\gamma}|_{W(q)})^{-1}$ is a $(e^{-\lambda_0})$-contraction, where $\lambda_0=\log 16$.

Given any $y\in\co_f^{-}(p)$ there exists a natural number $a$ such that $f^{ a}(y)= p$. Write $ a= (n_0\gamma)\,k_0+r_0$ with $0\le r_0 \le  (n_0\gamma)-1$ and $k_0\geq 0$. Let   $q_y=f^{ n_0\gamma-r_0}(p)\in\co^+_f(p)$.
Thus, $$f^{(k_0+1) n_0\gamma}(y)=f^{ n_0\gamma-r_0}(f^{k_0\, n_0\gamma+r_0}(y))=f^{ n_0\gamma-r_0}(p)=q_y.$$
We proceed to construct zooming neighborhoods of the point $y$.
Pick an open neighborhood  $W_0$ of $y$ so that $f^{(k_0+1) n_0\gamma}|_{W_{0}}$ is a diffeomorphism onto a
neighbourhood of $q_y$.
Note that since  $(f^{ n_0\gamma}|_{W(q_y)})^{-1}$ is a $(e^{-\lambda_0})$-contraction, there exists $j_0\in\NN$  such that for all $j\ge j_0$ it holds
$$
(f^{ n_0\gamma}|_{W(q_y)})^{-j}(B_{\delta}(q_y))\subset f^{(k_0+1) n_0\gamma}(W_0).
$$
For every $j\ge j_0$, let
$$U_{n_0\gamma(j+k_0+1)}(y):=(f^{(k_0+1){ n_0\gamma}}|_{W_0})^{-1}\circ (f^{ n_0\gamma}|_{W(q_y)})^{-j}(B_{\delta}(q_y)).$$
Therefore,
there is some $j_1\ge j_0$ such that $(f^{ n_0\gamma(j+k_0+1)}|_{U_{n_0\gamma(j+k_0+1)}(y)})^{-1}$ is a
$(e^{-\lambda_1})^{(j+k_0+1)}$-contraction for every $j\ge j_1$, where $\lambda_1=\log 8$.
By construction
\begin{equation}\label{eq9876533}
f^{ n_0\gamma(\,j+k_0+1)}(U_{n_0\gamma(j+k_0+1)}(y))=B_{\delta}(q_y)=B_{\delta}(f^{n_{0}\gamma (j+k_0+1)}(y)),
\end{equation}
and, consequently, $y\in Z_{j+k_0+1}(\alpha,\delta,f^{ n_0\gamma})$ for all $j\ge j_1$,
where the zooming sequence $\alpha=\{\alpha_n\}_n$ is given by
$\alpha_n(x)=e^{-\lambda_1\,n}x=(1/8)^n x$. This proves the first part of the lemma.

From now on set $\ell=n_0\gamma$. We obtained that $y\in Z_{j}(\alpha,\delta,f^{\ell})$ for all $j\ge j_1+k_0+1$
and, by \eqref{eq9876533}, it follows that
\begin{equation}\label{Eq867802h}
\#\{j\in\NN\,;\,y\in Z_j(\alpha,\delta,\widetilde{f}\,)\text{ and }\widetilde{f}^j(y)=q_y\}=\infty,
\end{equation}
where $\widetilde{f}=f^{\ell}$.

Consider the function $\varphi:\co_f^-(p)\to\co_f^+(p)$ defined as $\varphi(y)=q_y$, where $q_y\in\co_f^+(p)$
is chosen to satisfy \eqref{Eq867802h}. Since $\co_f^-(p)$ is dense in $M$ and one can write
$\co_f^-(p)=\varphi^{-1}(p)\uplus\cdots\uplus\varphi^{-1}(f^{\gamma-1}(p))$ as a disjoint union, there is $p'\in\co_f^+(p)$ such that $\varphi^{-1}(p')$ is dense in some open set $U\subset M$.
Let $x_0\in\varphi^{-1}(p')\cap U$ and $b\ge0$ such that $\widetilde{f}^{b}(x_0)=p'$. Since $f$ is a local homeomorphism, $\widetilde{f}^b(U)$ is a neighborhood of $p'$ and using that
$\varphi^{-1}(p')\subset \co_f^-(p)$ then it is also dense in $\widetilde{f}^b(U)$. This finishes the proof of the lemma.

\end{proof}

So in the remaining of this section we describe how to construct uncountable many ergodic and expanding
measures with full support and exponential decay of correlations.

\begin{proof}[Proof of Theorem~\ref{Theorem8769}]
Since the proof follows closely the one of \cite[Proposition~9.3~]{Vilton} we  give an outline of the
proof and focus on the main ingredients.
Assume that $f$ is a $C^1$ local diffeomorphism and $p$ is a periodic  source
with dense pre-orbit $\cO_f^-(p)$. 
Then, by the previous lemma
there exist $\ell\in\NN$ and
$\delta>0$, such that $\co_{f}^{-}(p)$ is a $(\alpha,\delta)$-zooming set
with respect to $\widetilde{f}:=f^{\ell}$ (in particular $\co_{\widetilde{f}}^-(p)$ is also a $(\alpha,\delta)$-zooming set for $\widetilde{f}$), where $\alpha=\{\alpha_n\}_n$ is the zooming sequence given by $\alpha_n(x)=(1/8)^n x$.
Moreover, changing $p$ for some $p'\in\co_f^+(p)$ if necessary, we have that $\co^Z(p):=\big\{y\in\co_{\widetilde{f}}^-(p)$ $;$
$\#\{j\in\NN\,;\,y\in Z_j(\alpha,\delta,\widetilde{f}\,)$ and $\widetilde{f}^j(y)=p\}=\infty$ $\big\}$ is dense in a neighborhood of $p$.
As $\sum_n\alpha_n(r)<r/4$, let $0<r<\delta/4$ be small such that $B_{r}(p)\subset\overline{\co^Z(p)}$.

So, the  $(\alpha,\delta)$-zooming nested ball with respect to $\widetilde{f}$, $\Delta=B^*_{r}(p)$,
is an open neighborhood of $p$ contained in $B_r(p)$ (see Definition~5.9 and also Lemma~5.12 in \cite{Vilton} for more details).
Furthermore, there is a dense set of points in $\Delta$ (the pre-orbit $\co^Z(p)\cap\Delta$) returning by $\widetilde{f}$ to $\Delta$ in a $(\alpha,\delta)$-zooming time.
In consequence, it follows from \cite[Corollary~6.6~]{Vilton} that there exists collection $\mathcal{P}$ of open connected subsets of $\Delta$ and an induced map $F:\Delta\to\Delta$ given by $F(x)=\widetilde{f}^{R(x)}(x)$ ($=f^{\ell\,R(x)}(x)$), with $\{R>0\}=\bigcup_{P\in\mathcal{P}}P$, such that $R$ is ``the first $(\alpha,\delta)$-zooming return time'' to $\Delta$ (see Definition~6.2~and~6.3 of \cite{Vilton}).

The function $R: \Delta \to\mathbb N$ is constant on elements of
$\mathcal P$ and $F$ satisfies the Markov property that $F(P)=\Delta$,
$F\mid_P$ is a $C^1$-diffeomorphism and $DF|_P>8$ for all $P\in\mathcal P$.

Now, for any sequence $a=(a_P)_{P\in\mathcal P}$ of real numbers satisfying
$0< a_P < 1$, $\sum_{P\in\cP} a_P=1$ and $\sum_{P\in\cP} a_P R(P)<\infty,$
let $\nu_a$ denote the Bernoulli measure which is defined on elements of the partition
$\cP^{(n)}=\bigvee_{j=0}^{n-1} F^{-j}\cP$ by
$$
\nu_a( P_0 \cap F^{-1} P_1 \cap \dots \cap F^{-(n-1)}P_{n-1} )
	= \prod_{j=0}^{n-1} a_{P_j}
$$
for all $n\ge 1$. It is not hard to check that $\nu_a$ is a $F$-invariant and ergodic probability measure and
 $\nu_a$ has constant Jacobian on cylinders (in fact $J_{\nu_a} F\mid_P =a_P$ for all $P\in\cP)$.
Now, using that $\int R \, d\nu_a=\sum_{P\in\cP} a_P R(P)<\infty$ then
$$
\mu_a=\frac{1}{\ell}\sum_{j=0}^{\ell-1}f_*^j\bigg(\frac{1}{\int R\, d\nu_a}
		\sum_{P\in\cP} \sum_{j=0}^{R(P)-1}\widetilde{f}^j_* (\nu_a\mid_P)\bigg)
$$
defines an $f$-invariant and ergodic probability measure. Moreover, $\mu_a\!\mid_{_\Delta} \ll \nu_a$ and
using that $\nu_a$ gives positive weight to open subsets of $\Delta$ then $P\subset \supp\mu_a$.
Since $f$ is transitive every positive invariant set with non empty interior is dense. Thus, $\mu_a$
has dense support and by compactness, the support is the whole manifold.

Furthermore, since each probability measure $\mu_a$ is ergodic and two ergodic measures either coincide or
are mutually singular, we deduce that there are uncountably many ergodic measures with full support, and those measures are expanding.
Indeed, $\nu_a$-almost every $x$ is $(\alpha,\delta)$-zooming, because $\nu_a\big(\bigcap_{j\ge0}F^{-j}(\{R>0\})\big)=1$ and we have
$$\limsup_{n}\frac{1}{n}\sum_{j=0}^{n-1}\log\big(\|(D\widetilde{f}(\widetilde{f}^j(x)))^{-1}\|^{-1}\big)>\log8>0$$
for every $x\in \bigcap_{j\ge0}F^{-j}(\{R>0\})$.
So, $\lim_{n}\frac{1}{n}\sum_{j=0}^{n-1}\log\big(\|(D\widetilde{f}(\widetilde{f}^j(x)))^{-1}\|^{-1}\big)>{\log8}>0$ for $\mu_a$-almost every $x$, since $\mu_a$ is $f$-invariant (and so, $\widetilde{f}$-invariant).
This implies that all Lyapunov exponents with respect to $\widetilde{f}$ (and also to $f$) are positive for $\mu_a$-almost every point.
 Finally, we notice that by \cite{Young}, $\mu_a$ has
exponential decay of correlations provided that
$$
\nu_a(R\ge n)
	=\sum_{k\ge n} \sum_{R(P)=k} a_P
$$
has exponential decay in $n$. Since the later property is satisfied for an uncountable many $(a_P)_{P\in\cP},$
this finishes the proof of the theorem.
\end{proof}


\section{Examples}\label{sec5}

\subsection{Existence of expanding measures with exponential decay}

We shall consider now an important class of robustly transitive
local diffeomorphisms introduced in \cite{LC,LP}.  Take $n\geq 2$
and $r\geq 1$. The following result holds:

\begin{theorem}\cite[Main Theorem]{LP}\label{teo4.1}
\emph{Let $f\in E^r(\mathbb{T}^n)$  be volume expanding map
 such that $\{w\in f^{-k}(x): k\in \mathbb{N}\}$ is dense for every $x\in \mathbb{T}^n$ and
satisfies the pro\-per\-ties:
\begin{enumerate}
\item[i)] There is an open set $U_0$ in $\mathbb{T}^n$ such that
      $f\!\!\mid_{U_0^c}$ is expanding and  $diam\!(U_0)\!\!<\!\!1$;
\item[ii)] There exists $0<\delta_0<diam_{int}(U_0^c)$ and there exists an open neighborhood $U_1$
        of $\overline{U}_0$ such that for every  arc $\gamma$ in $U_0^c$ with
      diameter larger than  $\delta_0,$ there is a point $y\in\gamma$ such that
      $f^k(y)\in U_1^c$ for any $k\geq 1$; and
\item[iii)] For every $z\in U_1^c,$ there exists $\bar{z}\in U_1^c$ such that $f(\bar{z})=z.$
\end{enumerate}
Then, for every $g$ $C^r-$close enough to $f$ all the points have dense pre-orbit,
that is,}  $\{w\in g^{-k}(x):
k\in \mathbb{N}\}$ is dense for every $x\in \mathbb{T}^n.$ In
particular, $f$ is $C^r$-robustly transitive.
\end{theorem}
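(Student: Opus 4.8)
\noindent\textit{Proof strategy.} The goal is to show that the property ``every point of $\mathbb{T}^n$ has dense pre-orbit'' is $C^r$-open at $f$; the ``in particular'' then comes for free, since a local diffeomorphism all of whose points have dense pre-orbit is transitive (given open $V,W$, pick $x\in W$; as $\bigcup_k g^{-k}(x)$ meets $V$ there are $w\in V$ and $k$ with $g^k(w)=x$, so $g^k(V)\cap W\neq\emptyset$). It is convenient to reformulate: for a fixed $g$, every point has dense pre-orbit if and only if $\bigcup_{n\ge 0}g^n(V)=\mathbb{T}^n$ for every nonempty open $V\subset\mathbb{T}^n$, so the plan is to fix a $C^r$-neighbourhood $\mathcal V$ of $f$ and prove this ``locally eventually onto'' property for all $g\in\mathcal V$. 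The first, routine, step is to produce robust versions of the hypotheses: using compactness of $\mathbb{T}^n$, $C^1$-openness of uniform expansion, and continuity of the data in (i)--(iii), one fixes open sets $\widetilde U_0\supset\overline{U_0}$ and $\widetilde U_1\supset\overline{U_1}$ with $\widetilde U_1$ separated from $\widetilde U_0$, a constant $\widetilde\delta_0>\delta_0$, and $\mathcal V$ so that for every $g\in\mathcal V$: $|\det Dg|>1$, $g$ is uniformly expanding on $\widetilde U_0^{\,c}$, every arc in $\widetilde U_0^{\,c}$ of diameter $>\widetilde\delta_0$ carries a point whose forward $g$-orbit stays in $\widetilde U_1^{\,c}$, and every point of $\widetilde U_1^{\,c}$ has a $g$-preimage in $\widetilde U_1^{\,c}$.

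The core is a \emph{growth step}: for every $g\in\mathcal V$ and every nonempty open $V$ there is $n$ with $g^n(V)$ containing an arc $\gamma\subset\widetilde U_0^{\,c}$ of diameter larger than $\widetilde\delta_0$. I would prove this by taking a short arc $\beta\subset V$ and following the connected components of $g^j(\beta)$: along $\widetilde U_0^{\,c}$ lengths of arcs grow by a definite factor, $\mathrm{diam}(U_0)<1$ prevents a long arc from fitting inside $U_0$, and volume expansion keeps the iterates of $\beta$ from being trapped in the non-expanding part; combining these, some iterate of $\beta$ must contain a sub-arc lying in $\widetilde U_0^{\,c}$ of diameter exceeding $\widetilde\delta_0$. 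This is where (i)--(ii) and $\mathrm{diam}(U_0)<1$ are genuinely used, and it is the step I expect to be the main obstacle, the delicate point being the behaviour of arcs that spiral inside $U_0$ before escaping.

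Granting this, a \emph{propagation step} closes the argument. Given $V$, take $\gamma\subset g^n(V)$ as above and, by the robust version of (ii), a point $y\in\gamma$ with $g^k(y)\in\widetilde U_1^{\,c}$ for all $k\ge 1$. Since $g$ is uniformly expanding on $\widetilde U_1^{\,c}$, a standard inverse-branch and bounded-distortion argument produces a radius $r_0>0$, uniform over $\mathcal V$, with $B_{r_0}(g^k(y))\subset g^{n+k}(V)$ for every $k\ge 1$; hence $\cO_V:=\bigcup_{m\ge0}g^m(V)$ is a forward-invariant open set containing a ball $B_{r_0}(z)$ with $z$ in the maximal invariant set $\Lambda_g=\bigcap_{k\in\mathbb{Z}}g^k(\overline{\widetilde U_1^{\,c}})$ (here (iii) supplies the backward orbit of $z$ inside $\widetilde U_1^{\,c}$). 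One then checks that $g|_{\Lambda_g}$ is an open expanding system which, by (ii), meets every long arc of $\widetilde U_0^{\,c}$ and is transitive, so that propagating along a dense forward orbit in $\Lambda_g$ gives $\cO_V\supseteq\Lambda_g$; re-applying the growth step to $\cO_V$ together with surjectivity and volume expansion of $g$ and the smallness of $\widetilde U_1$ upgrades this to $\cO_V=\mathbb{T}^n$. Besides the growth step, the points needing most care are the transitivity of $g|_{\Lambda_g}$ and the uniformity of all constants over $\mathcal V$, both of which I expect to be handled by the compactness and expansion already in play. This gives dense pre-orbits for every point and every $g\in\mathcal V$, hence $C^r$-robust transitivity of $f$.
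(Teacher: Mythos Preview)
This theorem is quoted from \cite{LP} and is not proved in the present paper; the surrounding discussion only sketches the mechanism used there. That sketch is: hypotheses (ii)--(iii) produce a locally maximal expanding invariant set $\Lambda_f\subset U_1^c$ with a \emph{separation property} (forward iterates of every open set eventually meet $\Lambda_f$), and then the Internal Radius Growth lemma quoted here as Lemma~\ref{lem4.1} yields, inside the forward iterates of any open set, a ball of uniform radius $R$ centred at a point of $\Lambda_g$; from there density of all pre-orbits follows. Your architecture matches this outline quite closely: your ``propagation step'' is exactly IRG applied at a point $y$ whose forward $g$-orbit stays in $\widetilde U_1^{\,c}$, together with the transitive expanding dynamics on $\Lambda_g$, and your ``growth step'' is your route to the separation property, via producing a long arc in $\widetilde U_0^{\,c}$ and then invoking (ii).

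The genuine gap is in the growth step, and you correctly flag it as the crux. The heuristic ``volume expansion keeps the iterates of $\beta$ from being trapped in the non-expanding part'' is not a valid argument: the condition $|\det Dg|>1$ controls $n$-dimensional volume, not one-dimensional arclength, so an arc that enters $U_0$ can be contracted in its tangent direction while the transverse directions are stretched, and nothing you have written rules out the iterates of $\beta$ oscillating between $U_0$ and $U_0^c$ while never attaining diameter $>\widetilde\delta_0$ inside $\widetilde U_0^{\,c}$. This is precisely the spiralling obstruction you mention, and it is not overcome by the ingredients you list; in \cite{LP} this is where the substantive work sits, and one must exploit volume growth of open neighbourhoods (to which $|\det Dg|>1$ genuinely applies) rather than of bare arcs, in combination with the geometry of $U_0$ and hypothesis (ii), to force the escape. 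The other points you isolate---transitivity of $g|_{\Lambda_g}$ and uniformity of the constants over $\mathcal V$---are, as you anticipate, handled by the uniform expansion on $\widetilde U_1^{\,c}$ together with compactness.
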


The latter theorem essentially means that robust transitivity is obtained for local diffeomorphisms
whose pre-orbits are dense, if it is uniformly expanding in a definite region of the ambient space and
for every sufficiently large arc in this expanding region there exists a point whose forward orbit remains
in the expanding region.
%
%
In particular all periodic points have in fact dense pre-orbits.
In fact, it follows from \cite{LP} that hypotheses (2) and (3) above assure
the existence of a locally maximal expanding invariant
set $\Lambda_f$ which has a topological property of `separation'.
Roughly, every open set intersects $\Lambda_f$ after a finite number of iterates
which implies for future iterates the internal radius growth(IRG property):


\begin{lemma}\cite[Lemma 2.29]{LP} \label{lem4.1}
There exist $\mathcal{V}_2(f)$ and $R>0$ such that for every $g\in \mathcal{V}_2(f),$
if there is $x\in M$ such that
$g^n(x)\not\in U_0$ for every $n\geq 0,$ then there is $\varepsilon_0>0$
such that for every $0<\varepsilon<\varepsilon_0,$ there exists
$N=N(\varepsilon)\in\mathbb{N}$
such that $\mathbb{B}_R(g^N(x))\subset g^N(\mathbb{B}_{\varepsilon}(x)).$
\end{lemma}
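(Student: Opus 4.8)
The plan is to use that the hypothesis $g^{n}(x)\notin U_{0}$ for all $n\ge 0$ forces the entire forward orbit $x_{n}:=g^{n}(x)$ to stay in the closed ``expanding region'' $U_{0}^{c}$, and then to exploit the uniform expansion of $g$ along this orbit: I would show that a single application of $g$ blows up any sufficiently small ball centred at a point of $U_{0}^{c}$ by a fixed factor $\lambda_{0}>1$, and then iterate along $x_{0},x_{1},x_{2},\dots$ until the radius reaches a definite size $R$. Thus the lemma reduces to a ``one step'' blow-up estimate together with a straightforward induction.

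First I would fix the $C^{1}$-neighbourhood $\mathcal V_{2}(f)$ and collect two uniform ingredients. Since $f|_{U_{0}^{c}}$ is expanding and $U_{0}^{c}$ is compact, a standard continuity and compactness argument yields $\lambda_{0}>1$, an open set $W\supset U_{0}^{c}$ and a $C^{1}$-neighbourhood $\mathcal V_{2}(f)$ such that $\|(Dg_{y})^{-1}\|^{-1}\ge\lambda_{0}$ for every $g\in\mathcal V_{2}(f)$ and every $y\in W$, the point being that pointwise expansion on the compact set $U_{0}^{c}$ survives passage to a neighbourhood and to $C^{1}$-small perturbations. Independently, local diffeomorphisms of a compact manifold are injective on balls of a uniform radius, and this radius may be taken uniform on a $C^{1}$-neighbourhood of $f$; so after shrinking $\mathcal V_{2}(f)$ there is $\rho_{0}>0$ with $g|_{B_{\rho_{0}}(y)}$ injective for all $g\in\mathcal V_{2}(f)$ and $y\in M$. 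Setting $r_{1}:=\min\{\rho_{0},\dist(U_{0}^{c},W^{c})\}>0$, I would then establish the one-step estimate: for every $g\in\mathcal V_{2}(f)$, $y\in U_{0}^{c}$ and $0<r\le r_{1}$,
\[
g\big(B_{r}(y)\big)\supset B_{\lambda_{0}r}\big(g(y)\big).
\]
This is the step that needs care. As $r\le\dist(U_{0}^{c},W^{c})$ and $y\in U_{0}^{c}$ we have $B_{r}(y)\subset W$, so $g|_{B_{r}(y)}$ expands distances by at least $\lambda_{0}$; as $r\le\rho_{0}$ it is injective, so its local inverse $h$ is $\lambda_{0}^{-1}$-Lipschitz on the open set $g(B_{r}(y))$. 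Given $z$ with $\dist(z,g(y))<\lambda_{0}r$, I would take a path from $g(y)$ to $z$ of length $<\lambda_{0}r$ and lift it through $h$ starting at $y$: the Lipschitz bound forces the lifted curve to have length $<r$, hence it never leaves $B_{r}(y)$, the lift extends all the way, and its endpoint $h(z)$ lies in $B_{r}(y)$, so $z\in g(B_{r}(y))$.

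With the one-step estimate in hand I would set $R:=r_{1}$ and $\eps_{0}:=r_{1}$ (both independent of $x$), fix $0<\eps<\eps_{0}$, and iterate: since every $x_{k}\in U_{0}^{c}$, induction on the displayed inclusion gives $g^{k}(B_{\eps}(x))\supset B_{\lambda_{0}^{k}\eps}(x_{k})$ as long as $\lambda_{0}^{k-1}\eps\le r_{1}$. Letting $N=N(\eps)$ be the first integer with $\lambda_{0}^{N}\eps>r_{1}$ and applying the one-step estimate once more at $y=x_{N-1}$ (with $r=\lambda_{0}^{N-1}\eps\le r_{1}$) gives $g^{N}(B_{\eps}(x))\supset B_{\lambda_{0}^{N}\eps}(x_{N})\supset B_{r_{1}}(x_{N})=B_{R}(g^{N}(x))$, which is the assertion. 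The only real obstacle here is the one-step blow-up estimate together with arranging the constants $\lambda_{0},\rho_{0},W$ to be uniform over the whole neighbourhood $\mathcal V_{2}(f)$; I expect no genuine difficulty, only the usual care with lifting paths through local inverses and with the fact that a small ball around a point of $U_{0}^{c}$ close to $\partial U_{0}$ may meet $U_{0}$ — which is exactly why the auxiliary neighbourhood $W$, rather than $U_{0}^{c}$ itself, is used throughout.
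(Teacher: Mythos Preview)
The paper does not supply its own proof of this lemma: it is quoted verbatim from \cite[Lemma~2.29]{LP}, and the surrounding text explicitly defers to \cite{LP} for the argument. So there is nothing in the present paper to compare your proposal against.

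That said, your argument is sound and is precisely the natural one. The only points that deserve a word of caution are exactly those you flag yourself: (i) the derivative bound $\|(Dg_y)^{-1}\|^{-1}\ge\lambda_0$ on $W$ does not directly give ``$g$ expands distances on $B_r(y)$''; what it gives is that the local inverse $h$ is $\lambda_0^{-1}$-Lipschitz on $g(B_r(y))$, and your path-lifting/continuation argument then correctly converts this into the inclusion $B_{\lambda_0 r}(g(y))\subset g(B_r(y))$ via an open--closed argument on the parameter interval; (ii) the induction uses only that each $x_k\in U_0^c$ and that the current radius is $\le r_1$, so the iteration terminates at the first $N$ with $\lambda_0^{N}\eps>r_1=R$. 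Both are handled cleanly in your write-up. As a bonus, your choice $\eps_0=r_1$ is uniform in $x$ and in $g\in\mathcal V_2(f)$, which is slightly stronger than the statement requires.
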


Note that Lemma \ref{lem4.1} proves the robustness of IRG property, which is
fundamental to prove the density of the pre-orbit
of any point under the perturbed map.  For further details, see  \cite{LP}.
%
%
 After the  discussion above, we are now in condition to
present a large class of examples that illustrate our main
results. Let us consider $\mathcal F$ the class of $C^r$
endomorphisms $f$ in the $n$-dimensional torus $\mathbb{T}^n$
satisfying the following properties:
\begin{enumerate}
\item (volume expanding) There exists $\sigma\!\!>\!\!1$
    such that $|det(Df(x))|\!\!\ge\!\! \sigma$ for all $x\!\in \!\mathbb{T}^n$;
\item 
	Every point has dense pre-orbit;
\item There is an open set $U_0$ in $\mathbb{T}^n$ such that
      $f\!\!\mid_{U_0^c}$ is expanding and  $diam\!(U_0)\!\!<\!\!1$;
\item There exists a  locally maximal expanding invariant set $\Lambda_f\subset U_0^c$ with the topological
property of `separation' above. 
\end{enumerate}

Observe that every map satisfying the assumptions of Theorem \ref{teo4.1} belong to $\mathcal F$.
Indeed, property (4) can be shown to  be a consequence of the hypotheses (ii) and (iii) of Theorem~\ref{teo4.1} (
we refer the reader to \cite{LP} for details), while by property (2) all periodic points have dense pre-orbit.
Moreover, since the periodic points are dense, there are plenty of them in the expanding region. Hence, there is at least one periodic source in $\Lambda_f,$ because this set is invariant and expanding. Thus, every $f\in\mathcal F$ has at least one periodic source in $\Lambda_f$ with dense pre-orbit. 
Therefore, Theorem~\ref{Theorem8769} yields  \emph{for any $f\in
\mathcal F$ there are uncountable many ergodic, invariant and
expanding measures with full support and exponential decay of
correlations.}

\subsection{Example of robustly non-existence of splitting}

In this subsection we provide a large class of examples satisfying
our main results with robust non-existence of splitting from~\cite[Example 1]{LP}.
We provide here just the main ideas about the construction and show that this
class of maps satisfy the assumptions of our main results, and refer the reader to
\cite{LP} for details on the construction.


Let us consider a linear ex\-pan\-ding endomorphism
$\mathcal{E}:\mathbb{T}^n\rightarrow\mathbb{T}^n$ with $n\geq 2$ and
large topological degree $N$. There exists a Markov partition, 
denote the elements by $R_i$ with $1\leq i \leq N$.
For our purpose we can work with the initial partition, but if you wish to construct
a more general example doing a deformation of the Markov partition it is enough
to consider an isotopic map to the identity.

%

Pick $U_0$ an open set in $\mathbb{T}^n$
such that its convex hull $\widetilde{U}$ on the lift
 is contained in the
interior of $[0,1]^n$ (that implies $diam(U_0)<1$) and there exists at least one
$R_i$ contained in $U_0^c$.
Assume there exist $p\in U_0$ and
$q_i\in U_0^c$ expanding fixed points of  $\mathcal{E}$
with $1\leq i\leq n-1.$ These requirements are feasible since $\mathcal{E}$ has large degree.
%
Choose $\varepsilon>0$ small enough such that $\mathbb{B}_\varepsilon(q_i)\cap U_0=\emptyset$
and $\mathbb{B}_\varepsilon(q_i)\cap \mathbb{B}_\varepsilon(q_j)=\emptyset$ for all $i\neq j.$
Denote the  tangent space splitting as follows
$$T_x(\mathbb{T}^n)=\mathbb{E}_1^u(x)\prec \mathbb{E}_2^u(x) \prec \cdots\prec \mathbb{E}_{n-1}^u(x)\prec \mathbb{E}_n^u(x),$$
where $\prec$ denote that $\mathbb{E}_i^u(x)$ dominates the
expanding behavior of $\mathbb{E}_{i-1}^u(x).$
We proceed now to deform $\mathcal{E}$  by a smooth isotopy supported in $U_0\cup
(\bigcup\mathbb{B}_\varepsilon(q_i)).$ The perturbation $f$ is done in such a way that:
\begin{enumerate}
\item the continuation of $p$ goes through a pitchfork bifurcation,
       giving birth to two periodic points $r_1$ and $r_2$ in $U_0$ such that both are repeller,
       $p$ becomes a saddle point and $f$ still expands volume in $U_0$;
\item two expanding eigenvalues of $q_i$ associated to $\mathbb{E}_i^u(q_i)$ and $\mathbb{E}_{i+1}^u(q_i)$
      of $T_{q_i}(\mathbb{T}^n)$ become complex expanding eigenvalues for $f$.
        Thus, these two expanding subbundles are mixed obtaining
       $T^f_{q_i}(\mathbb{T}^n)=\mathbb{E}_1^u\prec \mathbb{E}_2^u \prec \cdots\prec
       \mathbb{F}_i^u\prec \cdots\prec\mathbb{E}_n^u,$ where $\mathbb{F}_i^u$ is two dimensional
       and correspond to the complex eigenvalues associated to $q_i$;
\item $f$ coincides with $\mathcal{E}$ in the complement of $U_0\!\cup\!(\bigcup\!\mathbb{B}_\varepsilon(q_i))$.
     Hence,  $f$ is expanding in $U_0^c.$
\end{enumerate}

\begin{figure}[h]
\begin{center}
\includegraphics[scale=0.5]{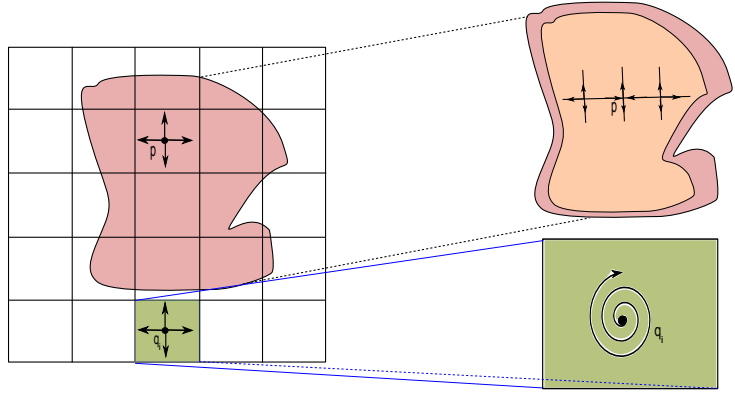}
\end{center}
\caption{\textit{$f$ isotopic to $\mathcal{E}$}}\label{graf3.2}
\end{figure}

Note that the e\-xis\-ten\-ce of these periodic points with complex eigenvalues prevent any non-trivial invariant
subbundle, and this construction is robust. Let us stress that the expanding region in these examples can be taken as small as desired. It can be shown that this class of examples satisfies the assumptions of Theorem~\ref{teo4.1} and,
therefore 
$f$ is a $C^1-$local diffeomorphism having a periodic source with dense pre-orbit. In particular, this class of examples admits uncountably many ergodic probability measures with exponential decay of correlations.

\section{Further comments}\label{sec6}

In this section, we address the problem of existence of relevant expanding measures for robustly transitive local diffeomorphisms
that admit some non-trivial invariant subbundle. First we introduce some notions.
Given a local diffeomorphism $f\in \text{Diff}_{\text{loc}}(M)$ and a compact forward invariant
set $\Lambda\subset M$  we say that $\Lambda$ admits a \emph{dominated splitting} if there exists a
continuous splitting $T_\Lambda M=E^1\oplus E^2$ and constants $C, a>0$ and $\la\in(0,1)$ such that
for all $x\in \Lambda$ and $n\in \mathbb N$:
\begin{itemize}
\item $Df(x) \, E^1_x = E^1_{f(x)}$ ($E^1$ is $Df$-invariant); and
\item the cone $\mathcal C^2_x=\{ u+v \in E^1_x\oplus E^2_x : \| u \| \le a \|v\|  \}$ satisfies
    the invariance condition $Df(x) (\mathcal C^2_x) \subset \mathcal C^2_{f(x)}$, and for all $v\in E^1_x \setminus\{0\}$
    and $w\in \mathcal C^2_x \setminus\{0\}$
    $$
    \frac{\|Df^n(x) v\|}{\|Df^n(x) w\|}  \leq C \lambda^n.
    $$
\end{itemize}
Since our previous results hold for maps whose tangent bundle does not admit  invariant subbundles
we now discuss the existence of expanding measures with full support in the presence of dominated splittings
that are robust by $C^1$-perturbations.

We say that a dominated splitting $T_\Lambda M=E^1\oplus E^2$ is  of \emph{expanding type}, namely if the
subbundle $E^1$ satisfies $\|(Df^n \mid_{E^1_x})^{-1}\| \leq C \lambda^n$ for all $x\in \Lambda$ and $n\ge 1$.
This implies that $f$ is uniformly expanding and so, by the theory developed by Sinai-Ruelle-Bowen, there are uncountable many $f$-invariant, ergodic and expanding measures with full support and exponential decay of correlations.

In a dual way,  we say that a dominated splitting $T_\Lambda M=E^1\oplus E^2$ is  of \emph{contracting type} if the
subbundle $E^1$ satisfies $\|Df^n \mid_{E^1_x}\| \leq C \lambda^n$ for all $x\in \Lambda$ and $n\ge 1$.
%
Note that if $T_\Lambda M=E^1\oplus E^2$ is a dominated splitting of contracting type then
expanding measures cannot exist due to the existence of invariant stable direction with uniform contraction
along the orbits. Hence, one could ask
wether there are uncountable ergodic and hyperbolic measures with total support and exponential decay
of correlations. The same strategy to prove Theorem~\ref{thm:measures} could answer the previous
question provided the existence of Markovian induced schemes for maps with a dense
non-uniformly hyperbolic set, which is an open question.

Finally, it remains to consider the case where $E^1$ is a center bundle  with non-uniform
expanding or contracting behavior and dominated by a a subbundle $E^2$ with uniform expansion.
Examples illustrating this situation and where there exists a periodic source with dense pre-orbit
can be found in \cite[subsection 5.3]{LP}. In particular such class of maps admit uncountable many invariant,
ergodic and expanding probability measures with full support and exponential decay of correlations.
We expect an analogous result as Theorem~\ref{thm:measures} to hold  for these type of maps.

%
%


\subsection*{Acknowledgments:}
The work was initiated after the Workshop on Dynamical Systems-Bahia 2011 at Universidade Federal da Bahia.
The authors are grateful to E. Pujals and L. D\'{i}az for
useful and encouraging conversations.
The first author is grateful to DMAT(PUC-Rio) and ICTP(math section) for the nice
environment provided during the preparation of this paper. The
first author was supported by CNPq and CDCHT-ULA project number AAA. The second and third
authors were also partially supported by CNPq and FAPESB. The second author was partially supported by the Balzan Research Project of J.Palis.


\end{document}